\numberwithin{equation}{section}
\theoremstyle{definition}
\numberwithin{equation}{section}
\newcommand{\ncom}{\newcommand}
\ncom{\beq}{\begin{equation}}
\ncom{\eeq}{\end{equation}}
\ncom{\bea}{\begin{eqnarray*}}
\ncom{\eea}{\end{eqnarray*}}
\ncom{\beqa}{\begin{eqnarray}}
\ncom{\eeqa}{\end{eqnarray}}
\ncom{\nno}{\nonumber}
\ncom{\non}{\nonumber}
\ncom{\ds}{\displaystyle}
\ncom{\half}{\frac{1}{2}}
\ncom{\mbx}{\makebox{.25cm}}
\ncom{\hs}{\mbox{\hspace{.25cm}}}
\ncom{\rar}{\rightarrow}
\ncom{\Rar}{\Rightarrow}
\ncom{\noin}{\noindent}
\ncom{\bc}{\begin{center}}
\ncom{\ec}{\end{center}}
\ncom{\sz}{\scriptsize}
\ncom{\rf}{\ref}
\ncom{\s}{\sqrt{2}}
\ncom{\sgm}{\sigma}
\ncom{\Sgm}{\Sigma}
\ncom{\psgm}{\sigma^{\prime}}
\ncom{\dt}{\delta}
\ncom{\Dt}{\Delta}
\ncom{\lmd}{\lambda}
\ncom{\Lmd}{\Lambda}
\ncom{\Th}{\Theta}
\ncom{\e}{\eta}
\ncom{\eps}{\epsilon}
\ncom{\pcc}{\stackrel{P}{>}}
\ncom{\lp}{\stackrel{L_{p}}{>}}
\ncom{\dist}{{\rm\,dist}}
\ncom{\sspan}{{\rm\,span}}
\ncom{\re}{{\rm Re\,}}
\ncom{\im}{{\rm Im\,}}
\ncom{\sgn}{{\rm sgn\,}}
\ncom{\ba}{\begin{array}}
\ncom{\ea}{\end{array}}
\ncom{\hone}{\mbox{\hspace{1em}}}
\ncom{\htwo}{\mbox{\hspace{2em}}}
\ncom{\hthree}{\mbox{\hspace{3em}}}
\ncom{\hfour}{\mbox{\hspace{4em}}}
\ncom{\vone}{\vskip 2ex}
\ncom{\vtwo}{\vskip 4ex}
\ncom{\vonee}{\vskip 1.5ex}
\ncom{\vthree}{\vskip 6ex}
\ncom{\vfour}{\vspace*{8ex}}
\ncom{\norm}{\|\;\;\|}
\ncom{\integ}[4]{\int_{#1}^{#2}\,{#3}\,d{#4}}
\ncom{\vspan}[1]{{{\rm\,span}\{ #1 \}}}
\ncom{\dm}[1]{ {\displaystyle{#1} } }
\ncom{\ri}[1]{{#1} \index{#1}}
\newtheorem{theorem}{\bf Theorem}[section]
\newtheorem{remark}{\bf Remark}[section]
\newtheorem{lemma}{Lemma}[section]
\newtheorem{corollary}{Corollary}[section]
\newtheoremstyle
    {remarkstyle}
    {}
    {11pt}
    {}
    {}
    {\bfseries}
    {:}
    {     }
    {\thmname{#1} \thmnumber{#2} }
\theoremstyle{remarkstyle}
\def\eps{\varepsilon}
\begin{document}
\title{Some Fractional Calculus results\\ associated with the $I$-Function}
\author[Kuldeep Kumar Kataria]{K. K. Kataria}
\address{Kuldeep Kumar Kataria, Department of Mathematics,
 Indian Institute of Technology Bombay, Powai, Mumbai 400076, INDIA.}
 \email{kulkat@math.iitb.ac.in}
\author{P. Vellaisamy}
\address{P. Vellaisamy, Department of Mathematics,
 Indian Institute of Technology Bombay, Powai, Mumbai 400076, INDIA.}
 \email{pv@math.iitb.ac.in}
\thanks{The research of K. K. Kataria was supported by UGC, Govt. of India.}
\subjclass[2010]{Primary : 26A33, 33C20; Secondary : 33C65, 33C05}
\keywords{$I$ function; Marichev-Saigo-Maeda fractional operators; Appell function.}
\begin{abstract}
The effect of Marichev-Saigo-Maeda (MSM) fractional operators involving third Appell function on the $I$ function is studied. It is shown that the order of the $I$-function increases on application of these operators to the power multiple of the $I$-function. The Caputo-type MSM fractional derivatives are introduced and studied for the $I$-function. As special cases, the corresponding assertions for Saigo and Erd\'elyi-Kober fractional operators are also presented. The results obtained in this paper generalize several known results obtained recently in the literature. 
\end{abstract}

\maketitle
\section{Introduction}
The Fox's $H$-function \cite{Fox395}  has found enormous application in the fields of statistical distributions, fractional calculus, statistical mechanics and some areas of engineering. Inayat-Hussain \cite{Hussain4119} generalized Fox's $H$-function namely to $\overline{H}$-function. Generalized Riemann zeta functions, poly-logarithms function of complex order and the exact partition functions of the  Gaussian model in statistical mechanics, among others, are the special cases of $\overline{H}$-function, which are not particular cases of $H$-function. Rathie \cite{Rathie297} introduced $I$-function, which includes $\overline{H}$-function as a special case. The $I$-function is represented by the following Mellin-Barnes type contour integral
\begin{equation}\label{1.2}
I^{m,n}_{p,q}\Bigg[z\left|
\begin{matrix}
    (a_i,A_i,\alpha_i)_{1,p}\\ 
    (b_j,B_j,\beta_j)_{1,q}
  \end{matrix}
\right.\Bigg]=\frac{1}{2\pi i}\int_{C}\chi(s)z^{-s}\,ds,
\end{equation}
where
\begin{equation}\label{n1.1}
\chi(s)=\frac{\prod_{j=1}^{m}\Gamma^{\beta_j}\left(b_j+B_js\right)\prod_{i=1}^{n}\Gamma^{\alpha_i}\left(1-a_i-A_is\right)}{\prod_{j=m+1}^{q}\Gamma^{\beta_j}\left(1-b_j-B_js\right)\prod_{i=n+1}^{p}\Gamma^{\alpha_i}\left(a_i+A_is\right)}.
\end{equation}
In the above definition, $z\neq0,m, n, p, q$ are integers satisfying $0 \leq m \leq q$ and $0 \leq n \leq p$ with $\alpha_i,A_i>0$ for $i=1,2,\ldots,p$ and $\beta_j,B_j>0$ for $j=1,2,\ldots,q$. Also, $a_i$'s and $b_j$'s are complex numbers such that no singularity of $\Gamma^{\beta_j}\left(b_j+B_js\right)$ coincides with any singularity of $\Gamma^{\alpha_i}\left(1-a_i-A_is\right)$. An empty product is to be interpreted as unity. The path of integration $C$, in the complex $s$-plane runs from $c-i\infty$ to $c+i\infty$ for some real number $c$ such that the singularity of $\Gamma^{\beta_j}\left(b_j+B_js\right)$ lie entirely to the right of the path and the singularity of $\Gamma^{\alpha_i}\left(1-a_i-A_is\right)$ lie entirely to the left of the path. For other possible contours $C$ and convergence conditions, various series representations, elementary properties of $I$-function one can refer \cite{Rathie297}. The $\overline{H}$-function follows as a particular case when $\alpha_i=1$ for $i=n+1,\ldots,p$ and $\beta_j=1$ for $j=1,2,\ldots,m$ in (\ref{1.2}). Recently, the extension of $I$-function to several complex variables (see \cite{Kumari285}, \cite{Prathima}) and its applications in wireless communication were studied by several authors (see \cite{Ansari394}, \cite{Ansari1}, \cite{Minghua5578}). 

In this paper, we derive some fractional calculus results associated with the $I$ function. For $\alpha,\alpha',\beta,\beta',\gamma\in\mathbb{C}$ and $x>0$ with $\operatorname{Re}(\gamma)>0$, the left- and right-hand sided Marichev-Saigo-Maeda (MSM) fractional integral operators \cite{Marichev1974128} associated with third Appell function are defined by
\begin{equation}\label{1.13}
\left(\mathcal{I}_{0+}^{\alpha,\alpha',\beta,\beta',\gamma}f\right)(x)=\frac{x^{-\alpha}}{\Gamma(\gamma)}\int_0^x(x-t)^{\gamma-1}t^{-\alpha'}F_3\left(\alpha,\alpha',\beta,\beta';\gamma;1-\frac{t}{x},1-\frac{x}{t}\right)f(t)\,dt
\end{equation}
and
\begin{equation}\label{1.14}
\left(\mathcal{I}_{-}^{\alpha,\alpha',\beta,\beta',\gamma}f\right)(x)=\frac{x^{-\alpha'}}{\Gamma(\gamma)}\int_x^{\infty}(t-x)^{\gamma-1}t^{-\alpha}F_3\left(\alpha,\alpha',\beta,\beta';\gamma;1-\frac{x}{t},1-\frac{t}{x}\right)f(t)\,dt,
\end{equation}
respectively. The corresponding fractional differential operators \cite{Saigo1998} have their respective forms as
\begin{equation}\label{1.15}
\left(\mathcal{D}_{0+}^{\alpha,\alpha',\beta,\beta',\gamma}f\right)(x)=\left(\frac{d}{dx}\right)^{[\operatorname{Re}(\gamma)]+1}\left(\mathcal{I}_{0+}^{-\alpha',-\alpha,-\beta'+[\operatorname{Re}(\gamma)]+1,-\beta,-\gamma+[\operatorname{Re}(\gamma)]+1}f\right)(x)
\end{equation}
and
\begin{equation}\label{1.16}
\left(\mathcal{D}_{-}^{\alpha,\alpha',\beta,\beta',\gamma}f\right)(x)=\left(-\frac{d}{dx}\right)^{[\operatorname{Re}(\gamma)]+1}\left(\mathcal{I}_{-}^{-\alpha',-\alpha,-\beta',-\beta+[\operatorname{Re}(\gamma)]+1,-\gamma+[\operatorname{Re}(\gamma)]+1}f\right)(x),
\end{equation}
where $[\operatorname{Re}(\alpha)]$ denotes the integer part of $\operatorname{Re}(\alpha)$. The third Appell function $F_3$ \cite{Prudnikov1990} (also known as Horn function), is defined by
\begin{equation*}\label{1.17}
F_3(\alpha,\alpha',\beta,\beta';\gamma;x;y)=\sum_{m,n=0}^{\infty}\frac{(\alpha)_{m}(\alpha')_{n}(\beta)_{m}(\beta')_{n}}{(\gamma)_{m+n}}\frac{x^my^n}{m!n!},
\end{equation*}
such that $\max\{|x|,|y|\}<1$. Here $(z)_{n}$ is the Pochhammer symbol, defined for $z\in\mathbb{C}$ by
\begin{equation*}
(z)_{n}=\left\{
	\begin{array}{ll}
	    1 & \mbox{if } n=0,\\
		z(z+1)(z+2)\ldots(z+(n-1))  & \mbox{if } n\in\mathbb{N}.
	\end{array}
\right.
\end{equation*}
Saigo \cite{Saigo1978135} introduced the fractional integral and differential operators involving Gauss hypergeometric function ${}_{2}F_1$ as the kernel.  For $\alpha,\beta,\gamma\in\mathbb{C}$ and $x>0$ with $\operatorname{Re}(\alpha)>0$, the left- and right-hand sided Saigo fractional integral operators are defined by
\begin{equation}\label{1.8}
\left(\mathcal{I}_{0+}^{\alpha,\beta,\gamma}f\right)(x)=\frac{x^{-\alpha-\beta}}{\Gamma(\alpha)}\int_0^x(x-t)^{\alpha-1}{}_{2}F_1\left(\alpha+\beta,-\gamma;\alpha;1-\frac{t}{x}\right)f(t)\,dt
\end{equation}
and
\begin{equation}\label{1.9}
\left(\mathcal{I}_{-}^{\alpha,\beta,\gamma}f\right)(x)=\frac{1}{\Gamma(\alpha)}\int_x^{\infty}(t-x)^{\alpha-1}t^{-\alpha-\beta}{}_{2}F_1\left(\alpha+\beta,-\gamma;\alpha;1-\frac{x}{t}\right)f(t)\,dt,
\end{equation}
respectively. 
The corresponding fractional differential operators are
\begin{equation}\label{1.10}
\left(\mathcal{D}_{0+}^{\alpha,\beta,\gamma}f\right)(x)=\left(\frac{d}{dx}\right)^{[\operatorname{Re}(\alpha)]+1}\left(\mathcal{I}_{0+}^{-\alpha+[\operatorname{Re}(\alpha)]+1,-\beta-[\operatorname{Re}(\alpha)]-1,\alpha+\gamma-[\operatorname{Re}(\alpha)]-1}f\right)(x)
\end{equation}
and
\begin{equation}\label{1.11}
\left(\mathcal{D}_{-}^{\alpha,\beta,\gamma}f\right)(x)=\left(-\frac{d}{dx}\right)^{[\operatorname{Re}(\alpha)]+1}\left(\mathcal{I}_{-}^{-\alpha+[\operatorname{Re}(\alpha)]+1,-\beta-[\operatorname{Re}(\alpha)]-1,\alpha+\gamma}f\right)(x).
\end{equation}
For $\beta=-\alpha$ and $\beta=0$ in (\ref{1.8})-(\ref{1.11}), we get the corresponding Riemann-Liouville and Erd\'elyi-Kober fractional operators  respectively (for definition see \cite{Kilbas2006}). The Gauss hypergeometric function is related to third Appell function as
\begin{equation*}\label{1.18}
F_3(\alpha,\gamma-\alpha,\beta,\gamma-\beta;\gamma;x;y)={}_{2}F_1\left(\alpha,\beta,\gamma;x+y-xy\right).
\end{equation*}
The MSM fractional operators (\ref{1.13})-(\ref{1.16}) are connected to Saigo operators (\ref{1.8})-(\ref{1.11}) by
\begin{equation}\label{1.19}
\left(\mathcal{I}_{0+}^{\alpha,0,\beta,\beta',\gamma}f\right)(x)=\left(\mathcal{I}_{0+}^{\gamma,\alpha-\gamma,-\beta}f\right)(x),\ \ \ \ 
\left(\mathcal{I}_{-}^{\alpha,0,\beta,\beta',\gamma}f\right)(x)=\left(\mathcal{I}_{-}^{\gamma,\alpha-\gamma,-\beta}f\right)(x)
\end{equation}
and
\begin{equation}\label{1.20}
\left(\mathcal{D}_{0+}^{0,\alpha',\beta,\beta',\gamma}f\right)(x)=\left(\mathcal{D}_{0+}^{\gamma,\alpha'-\gamma,\beta'-\gamma}f\right)(x),\ \ \ \ 
\left(\mathcal{D}_{-}^{0,\alpha',\beta,\beta',\gamma}f\right)(x)=\left(\mathcal{D}_{-}^{\gamma,\alpha'-\gamma,\beta'-\gamma}f\right)(x).
\end{equation}
In Section 2, some preliminary results are stated, which will be used in proofs of subsequent theorems.

\section{Preliminaries}
\setcounter{equation}{0}
\noindent The following notations are used throughout the paper:
\begin{eqnarray*}\label{2.1}
\mu&=&\sum_{j=1}^q\beta_jB_j-\sum_{i=1}^p\alpha_iA_i,\\
\Omega&=&\sum_{i=1}^p\left(\frac{1}{2}-\operatorname{Re}(a_i)\right)\alpha_i-\sum_{j=1}^q\left(\frac{1}{2}-\operatorname{Re}(b_j)\right)\beta_j,\\
\Delta&=&\sum_{j=1}^m\beta_jB_j-\sum_{j=m+1}^q\beta_jB_j+\sum_{i=1}^n\alpha_iA_i-\sum_{i=n+1}^p\alpha_iA_i,
\end{eqnarray*}
where $m,n$, $a_i,A_i,\alpha_i$ and $b_j,B_j,\beta_j$ appear in the definition of $I$-function (see \ref{1.2}). The $I$-function is analytic if $\mu\geq0$ and the integral in (\ref{1.2}) converges absolutely if $|\arg(z)|<\Delta\frac{\pi}{2}$, where $\Delta>0$. Also, if $|\arg(z)|=\Delta\frac{\pi}{2}$ with $\Delta\geq0$, then it converges absolutely under the following conditions (see \cite{Rathie297}): (i) $\mu=0$ and $\Omega<-1$. (ii) $|\mu|\neq0$ with $s=\sigma+it$, where $\sigma,t\in\mathbb{R}$ and are such that for $|t|\rightarrow\infty$ we have $\Omega+\sigma\mu<-1$.\\
\noindent The following are well known results for MSM integral operators of power functions (see \cite{Saigo1998}).
\begin{lemma}\label{l2.1}
Let $\alpha,\alpha',\beta,\beta',\gamma,\rho\in\mathbb{C}$ such that $\operatorname{Re}(\gamma)>0$.\\
\noindent (a) If $\operatorname{Re}(\rho)>$ $\max\{0,\operatorname{Re}(\alpha'-\beta'),\operatorname{Re}(\alpha+\alpha'+\beta-\gamma)\}$, then
\begin{equation}\label{2.4}
\left(I_{0+}^{\alpha,\alpha',\beta,\beta',\gamma}t^{\rho-1}\right)(x)=\frac{\Gamma(\rho)\Gamma(-\alpha'+\beta'+\rho)\Gamma(-\alpha-\alpha'-\beta+\gamma+\rho)}{\Gamma(\beta'+\rho)\Gamma(-\alpha-\alpha'+\gamma+\rho)\Gamma(-\alpha'-\beta+\gamma+\rho)}x^{-\alpha-\alpha'+\gamma+\rho-1}.
\end{equation}
\noindent (b) If $\operatorname{Re}(\rho)>$ $\max\{\operatorname{Re}(\beta),\operatorname{Re}(-\alpha-\alpha'+\gamma),\operatorname{Re}(-\alpha-\beta'+\gamma)\}$, then
\begin{equation}\label{2.5}
\left(I_{-}^{\alpha,\alpha',\beta,\beta',\gamma}t^{-\rho}\right)(x)=\frac{\Gamma(-\beta+\rho)\Gamma(\alpha+\alpha'-\gamma+\rho)\Gamma(\alpha+\beta'-\gamma+\rho)}{\Gamma(\rho)\Gamma(\alpha-\beta+\rho)\Gamma(\alpha+\alpha'+\beta'-\gamma+\rho)}x^{-\alpha-\alpha'+\gamma-\rho}.
\end{equation}
\end{lemma}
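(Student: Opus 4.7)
The plan is to prove part (a); part (b) follows from the mirror computation after the substitution $\tau=x/t$ in \eqref{1.14}. Plugging $f(t)=t^{\rho-1}$ into \eqref{1.13} and setting $t=x\tau$, every factor of $x$ separates cleanly, producing the expected power $x^{-\alpha-\alpha'+\gamma+\rho-1}$ outside the integral and reducing the claim to the dimensionless identity
\begin{equation*}
\frac{1}{\Gamma(\gamma)}\int_0^1 (1-\tau)^{\gamma-1}\tau^{\rho-\alpha'-1}\,F_3\!\left(\alpha,\alpha',\beta,\beta';\gamma;1-\tau,-\tfrac{1-\tau}{\tau}\right)d\tau = \frac{\Gamma(\rho)\Gamma(-\alpha'+\beta'+\rho)\Gamma(-\alpha-\alpha'-\beta+\gamma+\rho)}{\Gamma(\beta'+\rho)\Gamma(-\alpha-\alpha'+\gamma+\rho)\Gamma(-\alpha'-\beta+\gamma+\rho)}.
\end{equation*}

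To evaluate the left-hand side I would use the Euler-type simplex representation of $F_3$, valid when $\operatorname{Re}(\beta),\operatorname{Re}(\beta'),\operatorname{Re}(\gamma-\beta-\beta')>0$, which writes $F_3(\alpha,\alpha',\beta,\beta';\gamma;x,y)$ as a Beta-weighted average of $(1-xu)^{-\alpha}(1-yv)^{-\alpha'}$ over the simplex $\{u,v\ge 0,\ u+v\le 1\}$. Substituting this in and swapping the order of integration via Fubini, the inner $\tau$-integral (after the further change of variable $\sigma=1-\tau$, which converts $(1+v(1-\tau)/\tau)^{-\alpha'}$ into $(1-\sigma)^{\alpha'}(1-(1-v)\sigma)^{-\alpha'}$) becomes a classical Euler integral evaluating to $\Gamma(\gamma)\Gamma(\rho)/\Gamma(\gamma+\rho)$ times $F_1(\gamma;\alpha,\alpha';\gamma+\rho;u,1-v)$. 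The outer $(u,v)$-integration over the simplex can then be carried out by expanding $F_1$ in series and applying Gauss's summation theorem twice: once to produce the block $\Gamma(\rho)\Gamma(-\alpha'+\beta'+\rho)/\Gamma(\beta'+\rho)$ under the hypothesis $\operatorname{Re}(\rho)>\operatorname{Re}(\alpha'-\beta')$, and once to produce $\Gamma(-\alpha-\alpha'-\beta+\gamma+\rho)/[\Gamma(-\alpha-\alpha'+\gamma+\rho)\Gamma(-\alpha'-\beta+\gamma+\rho)]$ under the hypothesis $\operatorname{Re}(\rho)>\operatorname{Re}(\alpha+\alpha'+\beta-\gamma)$. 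The remaining hypothesis $\operatorname{Re}(\rho)>0$ is what ensures the inner Euler integral converges at $\sigma=1$.

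The main obstacle is the singular behaviour of the arguments of $F_3$: since $|{-}(1-\tau)/\tau|\to\infty$ as $\tau\to 0^+$, the Appell double series for $F_3$ does not converge on all of $(0,1)$, and a naive series-level termwise computation yields incorrect Gamma-function combinations (Gauss summation becomes invalid beyond its half-plane of convergence). The simplex representation sidesteps this because its integrand remains bounded for $\tau\in(0,1)$ and $(u,v)$ in the closed simplex, so every interchange of sum and integral takes place under absolute convergence. Once the identity is established in the restricted parameter region where the simplex representation is available, I extend to the full hypothesis set of the lemma by analytic continuation in $\alpha,\alpha',\beta,\beta',\gamma,\rho$, using that both sides are meromorphic in these variables and agree on a set with an accumulation point. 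Part (b) runs in parallel, with $\tau=x/t$ converting \eqref{1.14} into an integral of the same shape on $(0,1)$ and the corresponding Euler representation of $F_3$ driving the calculation.
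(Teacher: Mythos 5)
The paper itself contains no proof of Lemma~\ref{l2.1}: it is quoted as a known image formula from \cite{Saigo1998}, so there is no internal argument to compare yours against; what you propose is an independent derivation, and it is essentially correct. After $t=x\tau$ the power $x^{-\alpha-\alpha'+\gamma+\rho-1}$ and your dimensionless integral are right (note $1-x/t=-(1-\tau)/\tau$), and inserting the simplex (double Euler) representation of $F_3$ is legitimate here because the first argument lies in $(0,1)$ and the second is negative, so the integral supplies the continuation of the double series beyond its domain of convergence; the inner integral is indeed $\frac{\Gamma(\gamma)\Gamma(\rho)}{\Gamma(\gamma+\rho)}F_1(\gamma;\alpha,\alpha';\gamma+\rho;u,1-v)$, and integrating the $F_1$ series over the simplex and applying Gauss's theorem twice reproduces exactly $\Gamma(\rho)\Gamma(-\alpha'+\beta'+\rho)\Gamma(-\alpha-\alpha'-\beta+\gamma+\rho)\big/\bigl[\Gamma(\beta'+\rho)\Gamma(-\alpha-\alpha'+\gamma+\rho)\Gamma(-\alpha'-\beta+\gamma+\rho)\bigr]$, with the three conditions on $\operatorname{Re}(\rho)$ entering precisely where you say. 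Your treatment of (b) is even cleaner than advertised: with $t=x/\tau$ the operator \eqref{1.14} yields the \emph{identical} dimensionless integral with $\rho$ replaced by $\rho+\alpha+\alpha'-\gamma$, and this shift maps both the hypotheses and the Gamma quotient of part (a) onto those of part (b), so no separate computation is needed. Two points to tighten, which are presentational rather than gaps: the simplex integrand is not literally bounded (there are the usual endpoint singularities in $\tau$, $u$, $v$), so the Fubini and termwise steps should be justified by absolute convergence of the triple integral in a restricted real-parameter region, which your hypotheses do provide; and the final continuation in six complex parameters should be phrased as agreement on an open subset of the real parameter region, or carried out one variable at a time, since agreement on a set with an accumulation point is not a sufficient identity-theorem hypothesis in several variables. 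Compared with the usual literature derivations (termwise integration of the $F_3$ kernel against powers, or a decomposition of the MSM operator into simpler fractional integrals), your route buys a self-contained proof that also makes explicit why a naive expansion of $F_3$ at the unbounded argument $1-x/t$ would be illegitimate.
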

\noindent In subsequent theorems, the conditions for the absolute convergence of the integral involved in (\ref{1.2}) is assumed. Also, the contour of integration $C$ is assumed to be the imaginary axis \textit{i.e.} $\operatorname{Re}(s)=0$ .

\section{The MSM fractional integration of $I$-function}
\setcounter{equation}{0}
\noindent First, we present the left-hand sided MSM fractional integration of the $I$- function.
\begin{theorem}\label{t3.1}
Let $\alpha,\alpha',\beta,\beta',\gamma,\rho,a\in\mathbb{C}$ be such that $\operatorname{Re}(\gamma),\mu>0$ and $\operatorname{Re}\left(\rho\right)>\max\{0,$ $\operatorname{Re}(\alpha'-\beta'),\operatorname{Re}(\alpha+\alpha'+\beta-\gamma)\}$. Then for $x>0$,
\begin{eqnarray}\label{3.1}
&&\left(\mathcal{I}_{0+}^{\alpha,\alpha',\beta,\beta',\gamma}\left(t^{\rho-1}I^{m,n}_{p,q}\Bigg[at^{\mu}\left|
\begin{matrix}
    (a_i,A_i,\alpha_i)_{1,p}\\ 
    (b_j,B_j,\beta_j)_{1,q}
  \end{matrix}
\right.\Bigg]\right)\right)(x)\nonumber\\
&&\ \ \ \ \ \ \ \ \ \ =x^{-\alpha-\alpha'+\gamma+\rho-1}I^{m,n+3}_{p+3,q+3}\Bigg[ax^{\mu}\left|
\begin{matrix}
    (1-\rho,\mu,1)&(1+\alpha'-\beta'-\rho,\mu,1)\\ 
    (b_j,B_j,\beta_j)_{1,q}&(1-\beta'-\rho,\mu,1)
  \end{matrix}\right.\nonumber\\
&&\ \ \ \ \ \ \ \ \ \ \ \ \ \ \ \ \ \ \ 
\begin{matrix}
   (1+\alpha+\alpha'+\beta-\gamma-\rho,\mu,1)&(a_i,A_i,\alpha_i)_{1,p}\\ 
   (1+\alpha+\alpha'-\gamma-\rho,\mu,1)&(1+\alpha'+\beta-\gamma-\rho,\mu,1)
  \end{matrix}
\Bigg].
\end{eqnarray}
\end{theorem}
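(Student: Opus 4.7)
The strategy is to substitute the Mellin--Barnes contour integral representation \eqref{1.2} of the inner $I$-function into the MSM integral, swap the two integrations, and reduce the inner integral to an integral of a pure power via Lemma \ref{l2.1}(a). Schematically,
\begin{equation*}
\left(\mathcal{I}_{0+}^{\alpha,\alpha',\beta,\beta',\gamma}\bigl(t^{\rho-1}I^{m,n}_{p,q}[at^{\mu}|\cdots]\bigr)\right)(x)=\frac{1}{2\pi i}\int_{C}\chi(s)\,a^{-s}\left(\mathcal{I}_{0+}^{\alpha,\alpha',\beta,\beta',\gamma}t^{\rho-\mu s-1}\right)(x)\,ds,
\end{equation*}
where the interchange is legitimate because, by the blanket absolute-convergence hypothesis of Section 2 together with the fact that $C$ is the imaginary axis, the integrand is absolutely integrable in $(s,t)$ on any compact $t$-interval inside $(0,x)$ and the MSM kernel is locally integrable.

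The next step is the routine application of Lemma \ref{l2.1}(a) with $\rho$ replaced by $\rho-\mu s$. Since $\mu$ is real and $\operatorname{Re}(s)=0$ on $C$, we have $\operatorname{Re}(\rho-\mu s)=\operatorname{Re}(\rho)$, so the hypothesis $\operatorname{Re}(\rho)>\max\{0,\operatorname{Re}(\alpha'-\beta'),\operatorname{Re}(\alpha+\alpha'+\beta-\gamma)\}$ of the theorem exactly matches the admissibility condition in Lemma \ref{l2.1}(a) for every $s\in C$. The lemma produces the factor
\begin{equation*}
\frac{\Gamma(\rho-\mu s)\,\Gamma(-\alpha'+\beta'+\rho-\mu s)\,\Gamma(-\alpha-\alpha'-\beta+\gamma+\rho-\mu s)}{\Gamma(\beta'+\rho-\mu s)\,\Gamma(-\alpha-\alpha'+\gamma+\rho-\mu s)\,\Gamma(-\alpha'-\beta+\gamma+\rho-\mu s)}\,x^{-\alpha-\alpha'+\gamma+\rho-\mu s-1},
\end{equation*}
and after pulling $x^{-\alpha-\alpha'+\gamma+\rho-1}$ outside the integral the remaining $s$-dependent power combines with $a^{-s}$ to give $(ax^{\mu})^{-s}$.

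The final step is bookkeeping: recognise the resulting Mellin--Barnes integral as an $I$-function in the form \eqref{1.2}. Writing each of the three new numerator factors $\Gamma(\cdot-\mu s)$ in the form $\Gamma(1-a-\mu s)$ produces three new upper parameters $(1-\rho,\mu,1)$, $(1+\alpha'-\beta'-\rho,\mu,1)$, $(1+\alpha+\alpha'+\beta-\gamma-\rho,\mu,1)$, which are prepended to the original list $(a_i,A_i,\alpha_i)_{1,p}$; similarly, the three new denominator factors, rewritten as $\Gamma(1-b-\mu s)$, supply three new lower parameters $(1-\beta'-\rho,\mu,1)$, $(1+\alpha+\alpha'-\gamma-\rho,\mu,1)$, $(1+\alpha'+\beta-\gamma-\rho,\mu,1)$ appended after $(b_j,B_j,\beta_j)_{1,q}$. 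This increases $n\to n+3$, $p\to p+3$, $q\to q+3$ while leaving $m$ unchanged, yielding precisely the $I^{m,n+3}_{p+3,q+3}$ on the right-hand side of \eqref{3.1}.

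The only non-clerical point is justifying the Fubini swap and checking that Lemma \ref{l2.1}(a) is applicable uniformly for $s\in C$; both are handled by the assumption $\mu>0$ together with the convergence conditions recalled in Section 2, which ensure decay of $\chi(s)$ on the imaginary axis and keep the parameter $\rho-\mu s$ in the admissible strip for every $s\in C$. I expect this verification, rather than the algebraic manipulation, to be the only genuinely delicate step.
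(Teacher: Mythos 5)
Your proposal is correct and follows essentially the same route as the paper: substitute the Mellin--Barnes representation \eqref{1.2}, interchange the order of integration, apply Lemma \ref{l2.1}(a) with $\rho\to\rho-\mu s$ (valid since $\mu>0$ and $\operatorname{Re}(s)=0$ on $C$ keep the hypothesis intact), and reassemble the resulting gamma quotient $\chi_1(s)$ into the $I^{m,n+3}_{p+3,q+3}$-function. Your added remarks on the Fubini justification and the parameter bookkeeping are consistent with, and slightly more explicit than, the paper's argument.
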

\begin{proof}
The lhs of (\ref{3.1}) is given by
\begin{equation}\label{3.2}
\left(\mathcal{I}_{0+}^{\alpha,\alpha',\beta,\beta',\gamma}\left(t^{\rho-1}\frac{1}{2\pi i}\int_{C}\chi(s)(at^{\mu})^{-s}\,ds,\right)\right)(x),
\end{equation}
where $\chi(s)$ is given by (\ref{n1.1}). Interchanging the order of integration and using (\ref{2.4}), (\ref{3.2}) is equal to 
\begin{equation*}
\frac{1}{2\pi i}\int_{C}\chi(s)a^{-s}\left(\mathcal{I}_{0+}^{\alpha,\alpha',\beta,\beta',\gamma}t^{\rho-\mu s-1}\right)(x)\,ds=x^{-\alpha-\alpha'+\gamma+\rho-1}\frac{1}{2\pi i}\int_{C}\chi(s)\chi_1(s)(ax^{\mu})^{-s}\,ds,
\end{equation*}
where
\begin{equation*}
\chi_1(s)=\frac{\Gamma\left(\rho-\mu s\right)\Gamma\left(-\alpha'+\beta'+\rho-\mu s\right)\Gamma\left(-\alpha-\alpha'-\beta+\gamma+\rho-\mu s\right)}{\Gamma\left(\beta'+\rho-\mu s\right)\Gamma\left(-\alpha-\alpha'+\gamma+\rho-\mu s\right)\Gamma\left(-\alpha'-\beta+\gamma+\rho-\mu s\right)}.
\end{equation*}
The results now follows from (\ref{1.2}).
\end{proof}
\noindent In view of (\ref{1.19}), we have the following result for Saigo operators.
\begin{corollary}\label{c3.1}
Let $\alpha,\beta,\gamma,\rho,a\in\mathbb{C}$ be such that $\operatorname{Re}(\alpha),\mu>0$ and $\operatorname{Re}\left(\rho\right)>\max\{0,$ $\operatorname{Re}(\beta-\gamma)\}$. Then the left-hand sided generalized fractional integration $\mathcal{I}_{0+}^{\alpha,\beta,\gamma}$ of the $I$-function is given for $x>0$ by
\begin{eqnarray*}\label{3.3}
&&\left(\mathcal{I}_{0+}^{\alpha,\beta,\gamma}\left(t^{\rho-1}I^{m,n}_{p,q}\Bigg[at^{\mu}\left|
\begin{matrix}
    (a_i,A_i,\alpha_i)_{1,p}\\ 
    (b_j,B_j,\beta_j)_{1,q}
  \end{matrix}
\right.\Bigg]\right)\right)(x)\nonumber\\
&&\ \ =x^{-\beta+\rho-1}I^{m,n+2}_{p+2,q+2}\Bigg[ax^{\mu}\left|
\begin{matrix}
    (1-\rho,\mu,1)&(1+\beta-\gamma-\rho,\mu,1)&(a_i,A_i,\alpha_i)_{1,p}\\ 
    (b_j,B_j,\beta_j)_{1,q}&(1+\beta-\rho,\mu,1)&(1-\alpha-\gamma-\rho,\mu,1)
  \end{matrix}\right.\Bigg].
\end{eqnarray*}
\end{corollary}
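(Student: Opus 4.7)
The plan is to derive Corollary 3.1 as a direct specialization of Theorem 3.1 via the translation formulas (\ref{1.19}) that identify a Saigo operator with an MSM operator whose second parameter is zero. The first step is to solve the matching problem: to express $\mathcal{I}_{0+}^{\alpha,\beta,\gamma}$ (Saigo) in the form $\mathcal{I}_{0+}^{A,0,B,B',C}$ (MSM), I set $C=\alpha$, $A-C=\beta$, and $-B=\gamma$, i.e.\ $A=\alpha+\beta$, $A'=0$, $B=-\gamma$, $C=\alpha$, leaving $B'$ as a free complex parameter that does not appear in the Saigo operator at all.

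Next I apply Theorem \ref{t3.1} with these substitutions to the function $t^{\rho-1}I^{m,n}_{p,q}[at^{\mu}|\cdots]$. The power of $x$ coming out of the formula becomes $x^{-A-A'+C+\rho-1}=x^{-(\alpha+\beta)+\alpha+\rho-1}=x^{-\beta+\rho-1}$, matching the exponent in the corollary. The hypotheses $\operatorname{Re}(C)>0$ and $\operatorname{Re}(\rho)>\max\{0,\operatorname{Re}(A'-B'),\operatorname{Re}(A+A'+B-C)\}$ reduce to $\operatorname{Re}(\alpha)>0$ and $\operatorname{Re}(\rho)>\max\{0,\operatorname{Re}(-B'),\operatorname{Re}(\beta-\gamma)\}$; since $B'$ is free, I pick $B'$ with sufficiently large positive real part so that the middle term is dominated by the others, giving exactly the hypothesis in Corollary \ref{c3.1}.

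The main computational step is then verifying the cancellation that reduces $I^{m,n+3}_{p+3,q+3}$ to $I^{m,n+2}_{p+2,q+2}$. Writing out the six new parameter triples produced by Theorem \ref{t3.1} under the substitution above, the three ``upper'' triples become
\begin{equation*}
(1-\rho,\mu,1),\quad (1-B'-\rho,\mu,1),\quad (1+\beta-\gamma-\rho,\mu,1),
\end{equation*}
and the three ``lower'' triples become
\begin{equation*}
(1-B'-\rho,\mu,1),\quad (1+\beta-\rho,\mu,1),\quad (1-\alpha-\gamma-\rho,\mu,1).
\end{equation*}
The second upper triple and the first lower triple are identical; inserting them into the Mellin--Barnes integrand (\ref{n1.1}) produces a factor $\Gamma(B'+\rho-\mu s)$ in both the numerator and the denominator, which cancel exactly. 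This drops the indices to $I^{m,n+2}_{p+2,q+2}$ and leaves precisely the four new triples displayed in Corollary \ref{c3.1}, in the positions shown.

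The only subtlety I anticipate is keeping the order of the inserted triples consistent with the ``first $n$'' versus ``last $p-n$'' convention in the definition (\ref{1.2}), i.e.\ checking that the new upper triples are inserted immediately after the original $n$ numerator parameters (so they correspond to $\Gamma$-factors of the form $\Gamma(1-a_i-A_i s)$) and the new lower triples immediately after the original $q-m$ denominator parameters; but this is just bookkeeping in Theorem \ref{t3.1}, and once the cancellation is recorded, the final expression is mechanically identified with the statement of the corollary.
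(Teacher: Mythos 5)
Your proposal is correct and follows exactly the route the paper intends: specialize Theorem \ref{t3.1} with $\alpha'=0$ and parameters chosen via (\ref{1.19}), so that the triples $(1-\beta'-\rho,\mu,1)$ appear both upstairs and downstairs and the corresponding factors $\Gamma(\beta'+\rho-\mu s)$ cancel in $\chi_1(s)$, reducing $I^{m,n+3}_{p+3,q+3}$ to $I^{m,n+2}_{p+2,q+2}$. Your careful treatment of the free parameter $\beta'$ (which the corollary's hypotheses never mention, and which one may equally take to be $0$ so that the extra condition $\operatorname{Re}(\rho)>\operatorname{Re}(-\beta')$ is absorbed into $\operatorname{Re}(\rho)>0$) merely makes explicit what the paper leaves implicit in the phrase ``in view of (\ref{1.19})''.
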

\noindent The above corollary leads to Erd\'elyi-Kober fractional integral as follows.
\begin{corollary}\label{c3.3}
Let $\alpha,\gamma,\rho,a\in\mathbb{C}$ be such that $\operatorname{Re}(\alpha),\mu>0$ and $\operatorname{Re}\left(\rho\right)>\max\{0,$ $\operatorname{Re}(-\gamma)\}$. Then the left-hand sided Erd\'elyi-Kober fractional integration $\mathcal{I}_{\gamma,\alpha}^{+}$ $(=\mathcal{I}_{0+}^{\alpha,0,\gamma})$ of the $I$-function is given for $x>0$ by
\begin{eqnarray*}\label{3.5}
&&\left(\mathcal{I}_{\gamma,\alpha}^{+}\left(t^{\rho-1}I^{m,n}_{p,q}\Bigg[at^{\mu}\left|
\begin{matrix}
    (a_i,A_i,\alpha_i)_{1,p}\\ 
    (b_j,B_j,\beta_j)_{1,q}
  \end{matrix}
\right.\Bigg]\right)\right)(x)\nonumber\\
&&\ \ \ \ \ \ \ \ \ \ \ \ \ \ \ \ \ \ \ \ \ \ \ \ \ \ \ \ \ =x^{\rho-1}I^{m,n+1}_{p+1,q+1}\Bigg[ax^{\mu}\left|
\begin{matrix}
    (1-\gamma-\rho,\mu,1)&(a_i,A_i,\alpha_i)_{1,p}\\ 
    (b_j,B_j,\beta_j)_{1,q}&(1-\alpha-\gamma-\rho,\mu,1)
  \end{matrix}\right.\Bigg].
\end{eqnarray*}
\end{corollary}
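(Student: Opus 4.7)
The plan is to obtain Corollary 3.3 as an immediate specialization of Corollary 3.1. By definition the Erd\'elyi-Kober operator is $\mathcal{I}_{\gamma,\alpha}^{+} = \mathcal{I}_{0+}^{\alpha,0,\gamma}$, so I would simply set $\beta = 0$ throughout the statement of Corollary 3.1 and then simplify the resulting $I$-function of order $(m,n+2,p+2,q+2)$ by cancelling a coincident pair of parameters.

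First I would track the direct consequences of the substitution $\beta=0$: the prefactor $x^{-\beta+\rho-1}$ collapses to $x^{\rho-1}$, the hypothesis $\operatorname{Re}(\rho) > \max\{0,\operatorname{Re}(\beta-\gamma)\}$ reduces to $\operatorname{Re}(\rho)>\max\{0,\operatorname{Re}(-\gamma)\}$ (exactly the assumption in Corollary 3.3), and the two inserted top-row entries become $(1-\rho,\mu,1)$ and $(1-\gamma-\rho,\mu,1)$, while the two inserted bottom-row entries become $(1-\rho,\mu,1)$ and $(1-\alpha-\gamma-\rho,\mu,1)$.

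The key step is then to observe that the triple $(1-\rho,\mu,1)$ now appears simultaneously among the top-row numerator parameters (one of the first $n+2$ entries) and among the bottom-row denominator parameters (one of the last $q-m+2$ entries) of the $I$-function produced by Corollary 3.1. Reading off $\chi(s)$ from definition (\ref{n1.1}), the former placement contributes the factor $\Gamma^{1}(1-(1-\rho)-\mu s)=\Gamma(\rho-\mu s)$ to the numerator, while the latter contributes the identical factor $\Gamma(\rho-\mu s)$ to the denominator. These two gamma functions cancel, lowering the orders to $(m,n+1,p+1,q+1)$ and leaving precisely the parameter array displayed on the right-hand side of Corollary 3.3.

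No genuine analytic obstacle is expected here; the work is algebraic bookkeeping inside a Mellin--Barnes integrand. The one point warranting care is to verify which of the inserted parameters sit in ``numerator'' versus ``denominator'' positions after the $\beta=0$ substitution, so that the coincident triples indeed produce reciprocal (not duplicated) gamma factors. All convergence conditions on the contour $C$ are inherited verbatim from Corollary 3.1.
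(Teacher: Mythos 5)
Your proposal is correct and matches the paper's (implicit) derivation: Corollary 3.3 is obtained exactly by setting $\beta=0$ in Corollary 3.1, whereupon the inserted pair $(1-\rho,\mu,1)$ occurs once as a numerator parameter and once as a denominator parameter, so the two factors $\Gamma(\rho-\mu s)$ cancel in the Mellin--Barnes integrand and the orders drop to $I^{m,n+1}_{p+1,q+1}$ with the displayed parameter array and prefactor $x^{\rho-1}$. The bookkeeping of which inserted triples sit in numerator versus denominator positions, and the reduction of the hypothesis to $\operatorname{Re}(\rho)>\max\{0,\operatorname{Re}(-\gamma)\}$, are exactly as you describe.
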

\noindent The following result corresponds to the right-hand sided MSM fractional integration of the $I$-function.
\begin{theorem}\label{t3.2}
Let $\alpha,\alpha',\beta,\beta',\gamma,\rho,a\in\mathbb{C}$ be such that $\operatorname{Re}(\gamma),\mu>0$ and $\operatorname{Re}\left(\rho\right)>\max\{\operatorname{Re}(\beta),$ $\operatorname{Re}(-\alpha-\alpha'+\gamma),\operatorname{Re}(-\alpha-\beta'+\gamma)\}$. Then
\begin{eqnarray}\label{3.6}
&&\left(\mathcal{I}_{-}^{\alpha,\alpha',\beta,\beta',\gamma}\left(t^{-\rho}I^{m,n}_{p,q}\Bigg[at^{-\mu}\left|
\begin{matrix}
    (a_i,A_i,\alpha_i)_{1,p}\\ 
    (b_j,B_j,\beta_j)_{1,q}
  \end{matrix}
\right.\Bigg]\right)\right)(x)\nonumber\\
&&\ \ \ \ \ \ \ \ \ \ =x^{-\alpha-\alpha'+\gamma-\rho}I^{m,n+3}_{p+3,q+3}\Bigg[ax^{-\mu}\left|
\begin{matrix}
    (1+\beta-\rho,\mu,1)&(1-\alpha-\alpha'+\gamma-\rho,\mu,1)\\ 
    (b_j,B_j,\beta_j)_{1,q}&(1-\rho,\mu,1)
  \end{matrix}\right.\nonumber\\
&&\ \ \ \ \ \ \ \ \ \ \ \ \ \ \ \ \ \ \ \ \ \ 
\begin{matrix}
   (1-\alpha-\beta'+\gamma-\rho,\mu,1)&(a_i,A_i,\alpha_i)_{1,p}\\ 
   (1-\alpha+\beta-\rho,\mu,1)&(1-\alpha-\alpha'-\beta'+\gamma-\rho,\mu,1)
  \end{matrix}
\Bigg],
\end{eqnarray}
for $x>0$.
\end{theorem}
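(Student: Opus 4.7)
The plan is to follow the proof of Theorem \ref{t3.1} almost verbatim, using Lemma \ref{l2.1}(b) in place of Lemma \ref{l2.1}(a) and keeping track of the fact that the $I$-function here has argument $at^{-\mu}$ rather than $at^{\mu}$. First I would replace the $I$-function occurring under the $\mathcal{I}_{-}^{\alpha,\alpha',\beta,\beta',\gamma}$ operator by its defining Mellin-Barnes integral (\ref{1.2}), so that the left-hand side of (\ref{3.6}) becomes
\begin{equation*}
\left(\mathcal{I}_{-}^{\alpha,\alpha',\beta,\beta',\gamma}\left(t^{-\rho}\,\frac{1}{2\pi i}\int_{C}\chi(s)(at^{-\mu})^{-s}\,ds\right)\right)(x).
\end{equation*}
Writing $(at^{-\mu})^{-s}=a^{-s}t^{\mu s}$ combines with $t^{-\rho}$ to give an inner $t$-factor of $t^{-(\rho-\mu s)}$.

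Next I would interchange the order of the $s$- and $t$-integrations and apply Lemma \ref{l2.1}(b) with $\rho$ replaced by $\rho-\mu s$. On the contour $C=\{\operatorname{Re}(s)=0\}$ and with $\mu>0$ real one has $\operatorname{Re}(\rho-\mu s)=\operatorname{Re}(\rho)$, so the three inequalities hypothesized on $\operatorname{Re}(\rho)$ ensure that Lemma \ref{l2.1}(b) applies uniformly along $C$. Pulling out the resulting factor $x^{-\alpha-\alpha'+\gamma-\rho}$ and collecting powers of $a$ and $x$ into $(ax^{-\mu})^{-s}$ yields
\begin{equation*}
x^{-\alpha-\alpha'+\gamma-\rho}\,\frac{1}{2\pi i}\int_{C}\chi(s)\,\chi_2(s)\,(ax^{-\mu})^{-s}\,ds,
\end{equation*}
where
\begin{equation*}
\chi_2(s)=\frac{\Gamma(-\beta+\rho-\mu s)\,\Gamma(\alpha+\alpha'-\gamma+\rho-\mu s)\,\Gamma(\alpha+\beta'-\gamma+\rho-\mu s)}{\Gamma(\rho-\mu s)\,\Gamma(\alpha-\beta+\rho-\mu s)\,\Gamma(\alpha+\alpha'+\beta'-\gamma+\rho-\mu s)}.
\end{equation*}

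The final step is to recognize $\chi(s)\chi_2(s)$ as the kernel (\ref{n1.1}) of an $I$-function. Using the identity $\Gamma(c-\mu s)=\Gamma(1-(1-c)-\mu s)$, each of the three numerator gammas of $\chi_2(s)$ takes the form $\Gamma^{1}(1-a_i-A_is)$ with $A_i=\mu$, and therefore extends the top row of the $I$-function by the three entries $(1+\beta-\rho,\mu,1)$, $(1-\alpha-\alpha'+\gamma-\rho,\mu,1)$, $(1-\alpha-\beta'+\gamma-\rho,\mu,1)$, raising $n\to n+3$ and $p\to p+3$. The three denominator gammas of $\chi_2(s)$ similarly take the form $\Gamma^{1}(1-b_j-B_js)$ and extend the bottom row by $(1-\rho,\mu,1)$, $(1-\alpha+\beta-\rho,\mu,1)$, $(1-\alpha-\alpha'-\beta'+\gamma-\rho,\mu,1)$, raising $q\to q+3$ while leaving $m$ unchanged. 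This reproduces exactly the parameter list of the $I$-function displayed on the right-hand side of (\ref{3.6}).

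I expect the main obstacle to be purely notational bookkeeping: correctly placing each of the six new gamma factors in the $(m,n,p,q)$ structure and verifying the $c\mapsto 1-c$ parameter translations. The analytic content --- the Fubini interchange and the uniform applicability of Lemma \ref{l2.1}(b) along the contour --- is guaranteed by the hypotheses $\operatorname{Re}(\gamma),\mu>0$ together with the absolute-convergence convention adopted at the end of Section 2.
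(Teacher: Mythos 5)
Your proposal is correct and follows essentially the same route as the paper's own proof: expand the $I$-function via (\ref{1.2}), interchange the integrals, apply Lemma \ref{l2.1}(b) with $\rho\mapsto\rho-\mu s$, and identify the resulting kernel $\chi(s)\chi_2(s)$ (your $\chi_2$ agrees exactly with the paper's) as the $I^{m,n+3}_{p+3,q+3}$ in (\ref{3.6}). The parameter translations $(1+\beta-\rho,\mu,1)$, etc., are also exactly as in the paper, so no further comment is needed.
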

\begin{proof}
Using (\ref{1.2}), the lhs of (\ref{3.6}) is equal to
\begin{equation}\label{3.7}
\left(\mathcal{I}_{-}^{\alpha,\alpha',\beta,\beta',\gamma}\left(t^{-\rho}\frac{1}{2\pi i}\int_{C}\chi(s)(at^{-\mu})^{-s}\,ds\right)\right)(x),
\end{equation}
where $\chi(s)$ is given by (\ref{n1.1}). Interchanging the order of integration and using (\ref{2.5}), (\ref{3.7}) reduces to 
\begin{equation*}
\frac{1}{2\pi i}\int_{C}\chi(s)a^{-s}\left(\mathcal{I}_{-}^{\alpha,\alpha',\beta,\beta',\gamma}t^{-(\rho-\mu s)}\right)(x)\,ds=x^{-\alpha-\alpha'+\gamma-\rho}\frac{1}{2\pi i}\int_{C}\chi(s)\chi_2(s)(ax^{-\mu})^{-s}\,ds,
\end{equation*}
where
\begin{equation*}
\chi_2(s)=\frac{\Gamma\left(-\beta+\rho-\mu s\right)\Gamma\left(\alpha+\alpha'-\gamma+\rho-\mu s\right)\Gamma\left(\alpha+\beta'-\gamma+\rho-\mu s\right)}{\Gamma\left(\rho-\mu s\right)\Gamma\left(\alpha-\beta+\rho-\mu s\right)\Gamma\left(\alpha+\alpha'+\beta'-\gamma+\rho-\mu s\right)}.
\end{equation*}
The result follows from (\ref{1.2}).
\end{proof}
\noindent The Saigo and Erd\'elyi-Kober fractional integration of the $I$-function follow as corollaries.
\begin{corollary}\label{c3.4}
Let $\alpha,\beta,\gamma,\rho,a\in\mathbb{C}$ be such that $\operatorname{Re}(\alpha),\mu>0$ and $\operatorname{Re}\left(\rho\right)>\max\{\operatorname{Re}(-\beta),$ $\operatorname{Re}(-\gamma)\}$. Then the right-hand sided generalized fractional integration $\mathcal{I}_{-}^{\alpha,\beta,\gamma}$ of the $I$-function, for $x>0$, is given by
\begin{eqnarray*}\label{3.8}
&&\left(\mathcal{I}_{-}^{\alpha,\beta,\gamma}\left(t^{-\rho}I^{m,n}_{p,q}\Bigg[at^{-\mu}\left|
\begin{matrix}
    (a_i,A_i,\alpha_i)_{1,p}\\ 
    (b_j,B_j,\beta_j)_{1,q}
  \end{matrix}
\right.\Bigg]\right)\right)(x)\nonumber\\
&&=x^{-\beta-\rho}I^{m,n+2}_{p+2,q+2}\Bigg[ax^{-\mu}\left|
\begin{matrix}
    (1-\gamma-\rho,\mu,1)&(1-\beta-\rho,\mu,1)&(a_i,A_i,\alpha_i)_{1,p}\\ 
    (b_j,B_j,\beta_j)_{1,q}&(1-\rho,\mu,1)&(1-\alpha-\beta-\gamma-\rho,\mu,1)
  \end{matrix}\right.\Bigg].
\end{eqnarray*}
\end{corollary}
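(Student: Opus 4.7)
The plan is to deduce Corollary \ref{c3.4} from Theorem \ref{t3.2} via the connecting identity (\ref{1.19}), mirroring exactly the strategy by which Corollary \ref{c3.1} follows from Theorem \ref{t3.1}. The second relation in (\ref{1.19}) allows me to write
$$\left(\mathcal{I}_{-}^{\alpha,\beta,\gamma}f\right)(x) = \left(\mathcal{I}_{-}^{\alpha+\beta,\,0,\,-\gamma,\,\beta'',\,\alpha}f\right)(x)$$
for an arbitrary auxiliary parameter $\beta'' \in \mathbb{C}$, obtained by solving $\gamma_{\mathrm{MSM}} = \alpha$, $\alpha_{\mathrm{MSM}} - \gamma_{\mathrm{MSM}} = \beta$, $-\beta_{\mathrm{MSM}} = \gamma$ in (\ref{1.19}); the Saigo side carries no $\beta''$, so we are free to insert whichever value we please on the MSM side.

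Next I would feed the MSM tuple $(\alpha+\beta,0,-\gamma,\beta'',\alpha)$ into Theorem \ref{t3.2} together with the $I$-function in the statement. The MSM hypothesis $\operatorname{Re}(\gamma_{\mathrm{MSM}})>0$ becomes $\operatorname{Re}(\alpha)>0$, and the three MSM inequalities $\operatorname{Re}(\rho)>\max\{\operatorname{Re}(\beta_{\mathrm{MSM}}),\operatorname{Re}(-\alpha_{\mathrm{MSM}}-\alpha'_{\mathrm{MSM}}+\gamma_{\mathrm{MSM}}),\operatorname{Re}(-\alpha_{\mathrm{MSM}}-\beta'_{\mathrm{MSM}}+\gamma_{\mathrm{MSM}})\}$ translate, under our substitution, to $\operatorname{Re}(\rho)>\max\{\operatorname{Re}(-\gamma),\operatorname{Re}(-\beta),\operatorname{Re}(-\beta-\beta'')\}$; since $\beta''$ is our own parameter, we just pick it so that the third clause is subsumed by the first two, leaving the hypotheses in the statement. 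The prefactor $x^{-\alpha_{\mathrm{MSM}}-\alpha'_{\mathrm{MSM}}+\gamma_{\mathrm{MSM}}-\rho}$ collapses to $x^{-\beta-\rho}$, matching the corollary.

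The remaining task is to simplify the $I^{m,n+3}_{p+3,q+3}$ produced by Theorem \ref{t3.2}. Substituting the chosen parameters, the third new upper entry becomes $(1-\alpha_{\mathrm{MSM}}-\beta''_{\mathrm{MSM}}+\gamma_{\mathrm{MSM}}-\rho,\mu,1)=(1-\beta-\beta''-\rho,\mu,1)$, which is identical, slot for slot, to the third new lower entry $(1-\alpha_{\mathrm{MSM}}-\alpha'_{\mathrm{MSM}}-\beta''_{\mathrm{MSM}}+\gamma_{\mathrm{MSM}}-\rho,\mu,1)=(1-\beta-\beta''-\rho,\mu,1)$. By the standard reduction rule for the $I$-function (a Gamma factor appearing identically in the numerator and denominator of $\chi(s)$ cancels), this pair drops out, reducing the orders to $I^{m,n+2}_{p+2,q+2}$; the surviving four inserted parameters rearrange into exactly the bracket displayed in the corollary.

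The only mildly delicate step is the disappearance of the $\beta''$-dependent factors: this is precisely the mechanism that makes the relation (\ref{1.19}) a bona fide equality of Saigo operators in spite of the MSM operator carrying one extra parameter, and it is also what forces the decrease in order from $(n+3,p+3,q+3)$ in Theorem \ref{t3.2} down to $(n+2,p+2,q+2)$ in the corollary. No further analytic work is needed beyond this bookkeeping.
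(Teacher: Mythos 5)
Your derivation is correct and matches the route the paper intends: Corollary \ref{c3.4} is obtained from Theorem \ref{t3.2} through the reduction (\ref{1.19}) with MSM parameters $(\alpha+\beta,0,-\gamma,\beta'',\alpha)$, the free parameter $\beta''$ producing an identical numerator--denominator Gamma pair that cancels and lowers the order from $I^{m,n+3}_{p+3,q+3}$ to $I^{m,n+2}_{p+2,q+2}$. Your parameter bookkeeping, the prefactor $x^{-\beta-\rho}$, and the translated hypotheses all agree with the statement, so nothing further is needed.
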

\begin{corollary}\label{c3.6}
Let $\alpha,\gamma,\rho,a\in\mathbb{C}$ be such that $\operatorname{Re}(\alpha),\mu>0$ and $\operatorname{Re}\left(\rho\right)>\max\{0,$ $\operatorname{Re}(-\gamma)\}$. Then the right-hand sided Erd\'elyi-Kober fractional integration $\mathcal{K}_{\gamma,\alpha}^{-}$ $(=\mathcal{I}_{-}^{\alpha,0,\gamma})$ of $I$-function is given for $x>0$ by
\begin{eqnarray*}\label{3.10}
&&\left(\mathcal{K}_{\gamma,\alpha}^{-}\left(t^{-\rho}I^{m,n}_{p,q}\Bigg[at^{-\mu}\left|
\begin{matrix}
    (a_i,A_i,\alpha_i)_{1,p}\\ 
    (b_j,B_j,\beta_j)_{1,q}
  \end{matrix}
\right.\Bigg]\right)\right)(x)\nonumber\\
&&\ \ \ \ \ \ \ \ \ \ \ \ \ \ \ \ \ \ \ \ \ \ \ \ \ \ \ \ \ =x^{-\rho}I^{m,n+1}_{p+1,q+1}\Bigg[ax^{-\mu}\left|
\begin{matrix}
    (1-\gamma-\rho,\mu,1)&(a_i,A_i,\alpha_i)_{1,p}\\ 
    (b_j,B_j,\beta_j)_{1,q}&(1-\alpha-\gamma-\rho,\mu,1)
  \end{matrix}\right.\Bigg].
\end{eqnarray*}
\end{corollary}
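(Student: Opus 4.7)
The plan is to obtain Corollary \ref{c3.6} as a direct specialization of Corollary \ref{c3.4}. By the very definition in the statement, $\mathcal{K}^{-}_{\gamma,\alpha}=\mathcal{I}^{\alpha,0,\gamma}_{-}$, so I would apply Corollary \ref{c3.4} with the Saigo parameter $\beta$ set to $0$. On the hypothesis side, the condition $\operatorname{Re}(\rho)>\max\{\operatorname{Re}(-\beta),\operatorname{Re}(-\gamma)\}$ collapses to $\operatorname{Re}(\rho)>\max\{0,\operatorname{Re}(-\gamma)\}$, which is precisely the assumption stated here, and the prefactor $x^{-\beta-\rho}$ in Corollary \ref{c3.4} becomes $x^{-\rho}$, matching the claimed power.

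Next I would examine the parameter list of the resulting $I^{m,n+2}_{p+2,q+2}$ at $\beta=0$. The two extra top-row triples read $(1-\gamma-\rho,\mu,1)$ and $(1-\rho,\mu,1)$, while the two extra bottom-row triples read $(1-\rho,\mu,1)$ and $(1-\alpha-\gamma-\rho,\mu,1)$. The key observation is that the triple $(1-\rho,\mu,1)$ now appears once on each side.

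By the integrand formula (\ref{n1.1}), a top-row entry $(1-\rho,\mu,1)$ in the ``upper $n$'' block contributes a factor $\Gamma\bigl(1-(1-\rho)-\mu s\bigr)=\Gamma(\rho-\mu s)$ to the numerator of $\chi(s)$, while a bottom-row entry $(1-\rho,\mu,1)$ in the ``lower $q-m$'' block contributes the identical factor $\Gamma(\rho-\mu s)$ to the denominator. Both exponents are $1$, so the two gamma factors cancel exactly in the Mellin--Barnes integrand. Removing the matched pair decreases the upper parameter count from $n+2$ to $n+1$ (since the canceled top triple belonged to the $\Gamma(1-a-As)$ block) and the total counts from $(p+2,q+2)$ to $(p+1,q+1)$, leaving precisely the symbol $I^{m,n+1}_{p+1,q+1}$ with top parameters $(1-\gamma-\rho,\mu,1),(a_i,A_i,\alpha_i)_{1,p}$ and bottom parameters $(b_j,B_j,\beta_j)_{1,q},(1-\alpha-\gamma-\rho,\mu,1)$, as asserted.

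I do not expect any real obstacle. The one point to be careful about is that this reduction is a formal identity between $I$-function symbols, so I would briefly note that the cancellation is harmless with respect to the contour and convergence hypotheses: since the two $\Gamma(\rho-\mu s)$ factors are identical and both carry unit exponent, no pole that the contour $C$ was originally required to separate is newly created or destroyed, and the quantities $\mu$, $\Omega$, $\Delta$ governing absolute convergence (as recalled in Section 2) are unaffected by removing a matched pair. This is routine bookkeeping, so the proof is essentially a one-line appeal to Corollary \ref{c3.4} followed by a pole-cancellation remark.
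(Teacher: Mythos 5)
Your specialization argument is exactly the intended one: the paper states this corollary without a separate proof, as the case $\beta=0$ of Corollary \ref{c3.4} (itself Theorem \ref{t3.2} via (\ref{1.19})), where the hypothesis becomes $\operatorname{Re}(\rho)>\max\{0,\operatorname{Re}(-\gamma)\}$, the prefactor becomes $x^{-\rho}$, and the matched pair $(1-\rho,\mu,1)$ appearing once in the numerator block and once in the denominator block of $\chi(s)$ cancels, collapsing $I^{m,n+2}_{p+2,q+2}$ to $I^{m,n+1}_{p+1,q+1}$. Your added remark that the cancellation leaves $\mu$, $\Omega$, $\Delta$ and the pole-separation requirements unchanged is correct and only makes explicit what the paper leaves implicit.
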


\section{The MSM fractional differentiation of $I$-function}
\setcounter{equation}{0}
\noindent In this section, we derive the MSM fractional derivative of the $I$-function. We first require the following result.
\begin{lemma}\label{ll4.1}
Let $\alpha,\alpha',\beta,\beta',\gamma,\rho\in\mathbb{C}$.\\
\noindent (a) If $\operatorname{Re}(\rho)>$ $\max\{0,,\operatorname{Re}(-\alpha+\beta),\operatorname{Re}(-\alpha-\alpha'-\beta'+\gamma)\}$, then
\begin{equation}\label{l4.1}
\left(\mathcal{D}_{0+}^{\alpha,\alpha',\beta,\beta',\gamma}t^{\rho-1}\right)(x)=\frac{\Gamma(\rho)\Gamma(\alpha-\beta+\rho)\Gamma(\alpha+\alpha'+\beta'-\gamma+\rho)}{\Gamma(-\beta+\rho)\Gamma(\alpha+\alpha'-\gamma+\rho)\Gamma(\alpha+\beta'-\gamma+\rho)}x^{\alpha+\alpha'-\gamma+\rho-1}.
\end{equation}
\noindent (b) If $\operatorname{Re}(\rho)>$ $\max\{\operatorname{Re}(-\beta'),\operatorname{Re}(\alpha'+\beta-\gamma),\operatorname{Re}(\alpha+\alpha'-\gamma)+[\operatorname{Re}(\gamma)]+1\}$, then
\begin{equation}\label{l4.2}
\left(\mathcal{D}_{-}^{\alpha,\alpha',\beta,\beta',\gamma}t^{-\rho}\right)(x)=\frac{\Gamma\left(\beta'+\rho\right)\Gamma\left(-\alpha-\alpha'+\gamma+\rho\right)\Gamma\left(-\alpha'-\beta+\gamma+\rho\right)}{\Gamma\left(\rho\right)\Gamma\left(-\alpha'+\beta'+\rho\right)\Gamma\left(-\alpha-\alpha'-\beta+\gamma+\rho\right)}x^{\alpha+\alpha'-\gamma-\rho}.
\end{equation} 
\end{lemma}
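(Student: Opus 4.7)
The plan is to exploit the operator factorizations (\ref{1.15})--(\ref{1.16}) to reduce each derivative formula to an already-established power-function image from Lemma \ref{l2.1}, followed by an ordinary $k$-fold derivative, where I set $k=[\operatorname{Re}(\gamma)]+1$. Under this choice the inner ``$\gamma$-slot'' of the auxiliary integral operator becomes $-\gamma+k$, which has strictly positive real part, so the hypothesis $\operatorname{Re}(\gamma)>0$ in Lemma \ref{l2.1} is automatically satisfied.

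For part (a), I would first compute $(\mathcal{I}_{0+}^{-\alpha',-\alpha,-\beta'+k,-\beta,-\gamma+k}t^{\rho-1})(x)$ by substituting $(\alpha,\alpha',\beta,\beta',\gamma)\mapsto(-\alpha',-\alpha,-\beta'+k,-\beta,-\gamma+k)$ into (\ref{2.4}). A direct check shows that the admissibility conditions collapse to $\operatorname{Re}(\rho)>\max\{0,\operatorname{Re}(-\alpha+\beta),\operatorname{Re}(-\alpha-\alpha'-\beta'+\gamma)\}$, matching the stated hypothesis. The output is a ratio of six Gamma factors times $x^{\alpha+\alpha'-\gamma+k+\rho-1}$, with $\Gamma(\alpha+\alpha'-\gamma+k+\rho)$ appearing in the denominator. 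Applying $(d/dx)^{k}$ to the power $x^{\alpha+\alpha'-\gamma+k+\rho-1}$ produces, via the elementary identity
$$\left(\frac{d}{dx}\right)^{k} x^{s-1} = \frac{\Gamma(s)}{\Gamma(s-k)}\, x^{s-k-1},$$
the prefactor $\Gamma(\alpha+\alpha'-\gamma+k+\rho)/\Gamma(\alpha+\alpha'-\gamma+\rho)$, whose numerator cancels the $k$-dependent denominator from the integral step, yielding exactly the right-hand side of (\ref{l4.1}).

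Part (b) proceeds in parallel: apply Lemma \ref{l2.1}(b) with the substitution $(\alpha,\alpha',\beta,\beta',\gamma)\mapsto(-\alpha',-\alpha,-\beta',-\beta+k,-\gamma+k)$ to handle $t^{-\rho}$, and check that the three admissibility conditions reduce to the stated $\max\{\operatorname{Re}(-\beta'),\operatorname{Re}(\alpha'+\beta-\gamma),\operatorname{Re}(\alpha+\alpha'-\gamma)+k\}$. Then $(-d/dx)^{k}$ is applied to the resulting $x^{\alpha+\alpha'-\gamma+k-\rho}$; the sign is cleanest handled through $(-1)^{k}s(s-1)\cdots(s-k+1)=\Gamma(k-s)/\Gamma(-s)$ with $s=\alpha+\alpha'-\gamma+k-\rho$, converting the differentiation into the factor $\Gamma(-\alpha-\alpha'+\gamma+\rho)/\Gamma(-\alpha-\alpha'+\gamma-k+\rho)$; the $k$-containing Gamma cancels its twin in the integral formula and produces (\ref{l4.2}). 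The \emph{only} real obstacle is bookkeeping: verifying that every occurrence of $k=[\operatorname{Re}(\gamma)]+1$ injected by the factorizations (\ref{1.15})--(\ref{1.16}) either drops out of the admissibility conditions (because the shifts $-\beta'+k$ and $-\beta+k$ only enter through differences that are independent of $k$) or is eliminated by the telescoping Gamma quotient produced by the derivative. No analytic input beyond absolute convergence of the Mellin--Barnes integral is required.
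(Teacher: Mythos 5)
Your proposal is correct and follows essentially the same route as the paper: reduce via the factorizations (\ref{1.15})--(\ref{1.16}) to Lemma \ref{l2.1} with the substituted parameters, apply the $m$-fold derivative to the resulting power of $x$, and cancel the $m$-shifted Gamma factors, with the hypotheses matching exactly as you describe. The only cosmetic difference is in part (b), where the paper manages the sign and the shifted Gamma via the reflection formula, whereas you use the equivalent elementary identity $(-1)^{k}s(s-1)\cdots(s-k+1)=\Gamma(k-s)/\Gamma(-s)$.
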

\begin{proof}
(a) Let $m=[\operatorname{Re}(\gamma)]+1$. Using (\ref{1.15}) and (\ref{2.4}), the lhs of (\ref{l4.1}) is equal to
\begin{eqnarray*}
&&\left(\frac{d}{\,dx}\right)^m\left(\mathcal{I}_{0+}^{-\alpha',-\alpha,-\beta'+m,-\beta,-\gamma+m}t^{\rho-1}\right)(x)\\
&&\ \ \ \ \ \ =\frac{d^m}{\,dx^m}\frac{\Gamma\left(\rho\right)\Gamma\left(\alpha-\beta+\rho\right)\Gamma\left(\alpha+\alpha'+\beta'-\gamma+\rho\right)}{\Gamma\left(-\beta+\rho\right)\Gamma\left(\alpha+\alpha'-\gamma+\rho+m\right)\Gamma\left(\alpha+\beta'-\gamma+\rho\right)}x^{\alpha+\alpha'-\gamma+\rho+m-1}\\
&&\ \ \ \ \ \ =\frac{\Gamma\left(\rho\right)\Gamma\left(\alpha-\beta+\rho\right)\Gamma\left(\alpha+\alpha'+\beta'-\gamma+\rho\right)}{\Gamma\left(-\beta+\rho\right)\Gamma\left(\alpha+\alpha'-\gamma+\rho+m\right)\Gamma\left(\alpha+\beta'-\gamma+\rho\right)}\frac{d^m}{\,dx^m}x^{\alpha+\alpha'-\gamma+\rho+m-1},
\end{eqnarray*}
which on differentiation yields (\ref{l4.1}).\\
\noindent (b) Using (\ref{1.16}) and (\ref{2.5}), the lhs of (\ref{l4.2}) reduces to
\begin{eqnarray}\label{l4.7}
&&\left(-\frac{d}{\,dx}\right)^m\left(\mathcal{I}_{-}^{-\alpha',-\alpha,-\beta',-\beta+m,-\gamma+m}t^{-\rho}\right)(x)\nonumber\\
&&\ \ \ \ =\frac{\Gamma\left(\beta'+\rho\right)\Gamma\left(-\alpha-\alpha'+\gamma+\rho-m\right)\Gamma\left(-\alpha'-\beta+\gamma+\rho\right)}{(-1)^{-m}\Gamma\left(\rho\right)\Gamma\left(-\alpha'+\beta'+\rho\right)\Gamma\left(-\alpha-\alpha'-\beta+\gamma+\rho\right)}\frac{d^m}{\,dx^m}x^{\alpha+\alpha'-\gamma-\rho+m}.\nonumber\\
\end{eqnarray}
Also note that
\begin{equation}\label{l4.8}
\frac{d^m}{\,dx^m}x^{\alpha+\alpha'-\gamma-\rho+m}=\frac{\Gamma\left(\alpha+\alpha'-\gamma-\rho+m+1\right)}{\Gamma\left(\alpha+\alpha'-\gamma-\rho+1\right)}x^{\alpha+\alpha'-\gamma-\rho}.
\end{equation}
By the reflection formula for the gamma function  (see \cite{Erdelyi1953}), we obtain
\begin{eqnarray}\label{l4.9}
&&\Gamma\left(-\alpha-\alpha'+\gamma+\rho-m\right)\Gamma\left(1-\left(-\alpha-\alpha'+\gamma+\rho-m\right)\right)\nonumber\\
&&\ \ \ \ \ \ \ =\frac{\pi}{\sin{\left(-\alpha-\alpha'+\gamma+\rho-m\right)\pi}}=\frac{\pi}{(-1)^m\sin{\left(-\alpha-\alpha'+\gamma+\rho\right)\pi}}.
\end{eqnarray}
Similarly,
\begin{equation}\label{l4.10}
\Gamma\left(-\alpha-\alpha'+\gamma+\rho\right)\Gamma\left(1-\left(-\alpha-\alpha'+\gamma+\rho\right)\right)=\frac{\pi}{\sin{\left(-\alpha-\alpha'+\gamma+\rho\right)\pi}}.
\end{equation}
On substituting (\ref{l4.8})-(\ref{l4.10}) in (\ref{l4.7}), the result follows.
\end{proof}
\begin{remark}\label{r4.1}
The following connections between Lemma \ref{l2.1} and Lemma \ref{ll4.1} are clear in view of (\ref{1.15}) and (\ref{1.16}).\\
\noindent (a) If in the hypothesis of Lemma \ref{l2.1}(a), we make the changes $$\alpha\rightarrow-\alpha',\ \alpha'\rightarrow-\alpha,\ \beta\rightarrow-\beta'+[\operatorname{Re}(\gamma)]+1,\ \beta'\rightarrow-\beta,\ \gamma\rightarrow-\gamma+[\operatorname{Re}(\gamma)]+1$$ and in the rhs of (\ref{l4.1}), $\alpha\rightarrow-\alpha'$, $\alpha'\rightarrow-\alpha$, $\beta\rightarrow-\beta'$, $\beta'\rightarrow-\beta$, $\gamma\rightarrow-\gamma$ respectively, then Lemma \ref{ll4.1}(a) follows.\\
\noindent (b) If in the hypothesis of Lemma \ref{l2.1}(b), we make the changes $$\alpha\rightarrow-\alpha',\ \alpha'\rightarrow-\alpha,\ \beta\rightarrow-\beta',\ \beta'\rightarrow-\beta+[\operatorname{Re}(\gamma)]+1,\ \gamma\rightarrow-\gamma+[\operatorname{Re}(\gamma)]+1$$ and in the rhs of (\ref{l4.2}), $\alpha\rightarrow-\alpha'$, $\alpha'\rightarrow-\alpha$, $\beta\rightarrow-\beta'$, $\beta'\rightarrow-\beta$, $\gamma\rightarrow-\gamma$ respectively, then Lemma \ref{ll4.1}(b) is obtained.
\end{remark}
\noindent The next result gives the left-hand sided MSM fractional derivative of the $I$-function.
\begin{theorem}\label{t4.1}
Let $\alpha,\alpha',\beta,\beta',\gamma,\rho,a\in\mathbb{C}$ be such that $\mu>0$ and $\operatorname{Re}\left(\rho\right)>\max\{0,$ $\operatorname{Re}(-\alpha+\beta),\operatorname{Re}(-\alpha-\alpha'-\beta'+\gamma)\}$. Then for $x>0$,
\begin{eqnarray}\label{4.1}
&&\left(\mathcal{D}_{0+}^{\alpha,\alpha',\beta,\beta',\gamma}\left(t^{\rho-1}I^{m,n}_{p,q}\Bigg[at^{\mu}\left|
\begin{matrix}
    (a_i,A_i,\alpha_i)_{1,p}\\ 
    (b_j,B_j,\beta_j)_{1,q}
  \end{matrix}
\right.\Bigg]\right)\right)(x)\nonumber\\
&&\ \ \ \ \ \ \ \ \ \ \ =x^{\alpha+\alpha'-\gamma+\rho-1}I^{m,n+3}_{p+3,q+3}\Bigg[ax^{\mu}\left|
\begin{matrix}
    (1-\rho,\mu,1)&(1-\alpha+\beta-\rho,\mu,1)\\ 
    (b_j,B_j,\beta_j)_{1,q}&(1+\beta-\rho,\mu,1)
  \end{matrix}\right.\nonumber\\
&&\ \ \ \ \ \ \ \ \ \ \ \ \ \ \ \ \ \  
\begin{matrix}
   (1-\alpha-\alpha'-\beta'+\gamma-\rho,\mu,1)&(a_i,A_i,\alpha_i)_{1,p}\\ 
   (1-\alpha-\alpha'+\gamma-\rho,\mu,1)&(1-\alpha-\beta'+\gamma-\rho,\mu,1)
  \end{matrix}
\Bigg].
\end{eqnarray}
\end{theorem}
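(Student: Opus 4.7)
The plan is to mirror the proof of Theorem \ref{t3.1}, with the fractional integral $\mathcal{I}_{0+}^{\alpha,\alpha',\beta,\beta',\gamma}$ replaced by the derivative $\mathcal{D}_{0+}^{\alpha,\alpha',\beta,\beta',\gamma}$ and Lemma \ref{l2.1}(a) replaced by Lemma \ref{ll4.1}(a). Concretely, I would first substitute the Mellin--Barnes representation (\ref{1.2}) of $I^{m,n}_{p,q}[at^{\mu}|\cdot]$ into the left-hand side of (\ref{4.1}), obtaining
\begin{equation*}
\left(\mathcal{D}_{0+}^{\alpha,\alpha',\beta,\beta',\gamma}\left(t^{\rho-1}\,\frac{1}{2\pi i}\int_{C}\chi(s)(at^{\mu})^{-s}\,ds\right)\right)(x),
\end{equation*}
with $\chi(s)$ as in (\ref{n1.1}).

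Next, I would interchange the $s$-integration with the operator $\mathcal{D}_{0+}^{\alpha,\alpha',\beta,\beta',\gamma}$. Since the contour $C=\{s:\operatorname{Re}(s)=0\}$ keeps $\operatorname{Re}(\rho-\mu s)=\operatorname{Re}(\rho)$ fixed and $\mu>0$, the hypothesis $\operatorname{Re}(\rho)>\max\{0,\operatorname{Re}(-\alpha+\beta),\operatorname{Re}(-\alpha-\alpha'-\beta'+\gamma)\}$ guarantees that Lemma \ref{ll4.1}(a) applies uniformly to each $t^{\rho-\mu s-1}$ on $C$, while the absolute convergence assumed in Section~2 legitimizes the swap. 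Applying Lemma \ref{ll4.1}(a) then yields
\begin{equation*}
\left(\mathcal{D}_{0+}^{\alpha,\alpha',\beta,\beta',\gamma}t^{\rho-\mu s-1}\right)(x)=x^{\alpha+\alpha'-\gamma+\rho-\mu s-1}\chi_3(s),
\end{equation*}
where
\begin{equation*}
\chi_3(s)=\frac{\Gamma(\rho-\mu s)\Gamma(\alpha-\beta+\rho-\mu s)\Gamma(\alpha+\alpha'+\beta'-\gamma+\rho-\mu s)}{\Gamma(-\beta+\rho-\mu s)\Gamma(\alpha+\alpha'-\gamma+\rho-\mu s)\Gamma(\alpha+\beta'-\gamma+\rho-\mu s)}.
\end{equation*}
Factoring out the constant-in-$s$ power $x^{\alpha+\alpha'-\gamma+\rho-1}$ reduces the expression to $x^{\alpha+\alpha'-\gamma+\rho-1}\cdot\frac{1}{2\pi i}\int_C \chi(s)\chi_3(s)(ax^{\mu})^{-s}\,ds$.

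Finally, I would recognize the product $\chi(s)\chi_3(s)$ as the kernel of a higher-order $I$-function via (\ref{1.2})--(\ref{n1.1}). Writing each numerator gamma of $\chi_3$ in the form $\Gamma(1-a-\mu s)$ contributes a new entry to the upper $n$-block: $(1-\rho,\mu,1)$, $(1-\alpha+\beta-\rho,\mu,1)$, and $(1-\alpha-\alpha'-\beta'+\gamma-\rho,\mu,1)$. Similarly, each denominator gamma, written as $\Gamma(1-b-\mu s)$, contributes a new entry to the lower $\{m+1,\dots,q\}$-block: $(1+\beta-\rho,\mu,1)$, $(1-\alpha-\alpha'+\gamma-\rho,\mu,1)$, and $(1-\alpha-\beta'+\gamma-\rho,\mu,1)$. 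Assembling these with the original parameter list produces the $I^{m,n+3}_{p+3,q+3}$ displayed in (\ref{4.1}).

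The main obstacle is essentially organizational: assigning each new gamma factor to the correct slot (upper $n$-block of ``$\Gamma(1-a-As)$'' type versus lower $\{m+1,\dots,q\}$-block of ``$\Gamma(1-b-Bs)$'' type) so the output matches the stated $I$-function, and checking that the three max-type lower bounds on $\operatorname{Re}(\rho)$ are precisely those needed to apply Lemma \ref{ll4.1}(a) along the imaginary axis. Once this bookkeeping is handled, the rest is a direct transcription of the argument already used for Theorem \ref{t3.1}.
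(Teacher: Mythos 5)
Your proposal is correct and follows essentially the same route as the paper: substitute the Mellin--Barnes representation (\ref{1.2}), interchange the contour integration with $\mathcal{D}_{0+}^{\alpha,\alpha',\beta,\beta',\gamma}$, apply Lemma \ref{ll4.1}(a) to $t^{\rho-\mu s-1}$ to produce exactly the factor $\chi_3(s)$ appearing in the paper, and reassemble $\chi(s)\chi_3(s)$ into the $I^{m,n+3}_{p+3,q+3}$ via (\ref{1.2}). Your slot assignments for the new gamma factors and the observation that $\operatorname{Re}(\rho-\mu s)=\operatorname{Re}(\rho)$ on the imaginary-axis contour match the intended argument.
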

\begin{proof}
On using (\ref{1.2}), the lhs of (\ref{4.1}) equals
\begin{equation}\label{f3.2}
\left(\mathcal{D}_{0+}^{\alpha,\alpha',\beta,\beta',\gamma}\left(t^{\rho-1}\frac{1}{2\pi i}\int_{C}\chi(s)(at^{\mu})^{-s}\,ds,\right)\right)(x),
\end{equation}
where $\chi(s)$ is given by (\ref{n1.1}). Interchanging the order of integration and differentiation and using (\ref{l4.1}), (\ref{f3.2}) becomes
\begin{equation*}
\frac{1}{2\pi i}\int_{C}\chi(s)a^{-s}\left(\mathcal{D}_{0+}^{\alpha,\alpha',\beta,\beta',\gamma}t^{\rho-\mu s-1}\right)(x)\,ds=x^{\alpha+\alpha'-\gamma+\rho-1}\frac{1}{2\pi i}\int_{C}\chi(s)\chi_3(s)(ax^{\mu})^{-s}\,ds,
\end{equation*}
where
\begin{equation*}
\chi_3(s)=\frac{\Gamma\left(\rho-\mu s\right)\Gamma\left(\alpha-\beta+\rho-\mu s\right)\Gamma\left(\alpha+\alpha'+\beta'-\gamma+\rho-\mu s\right)}{\Gamma\left(-\beta+\rho-\mu s\right)\Gamma\left(\alpha+\alpha'-\gamma+\rho-\mu s\right)\Gamma\left(\alpha+\beta'-\gamma+\rho-\mu s\right)}.
\end{equation*}
The result follows from (\ref{1.2}).
\end{proof}
\noindent Following corollaries for Saigo and Erd\'elyi-Kober fractional derivatives follow immediately.
\begin{corollary}\label{c4.1}
Let $\alpha,\beta,\gamma,\rho,a\in\mathbb{C}$ be such that $\mu>0$ and $\operatorname{Re}\left(\rho\right)>\max\{0,\operatorname{Re}(-\alpha-\beta-\gamma)\}$. Then the left-hand sided generalized fractional differentiation $\mathcal{D}_{0+}^{\alpha,\beta,\gamma}$ of the $I$-function is given for $x>0$ by
\begin{eqnarray*}\label{4.3}
&&\left(\mathcal{D}_{0+}^{\alpha,\beta,\gamma}\left(t^{\rho-1}I^{m,n}_{p,q}\Bigg[at^{\mu}\left|
\begin{matrix}
    (a_i,A_i,\alpha_i)_{1,p}\\ 
    (b_j,B_j,\beta_j)_{1,q}
  \end{matrix}
\right.\Bigg]\right)\right)(x)\nonumber\\
&&\ \ =x^{\beta+\rho-1}I^{m,n+2}_{p+2,q+2}\Bigg[ax^{\mu}\left|
\begin{matrix}
    (1-\rho,\mu,1)&(1-\alpha-\beta-\gamma-\rho,\mu,1)&(a_i,A_i,\alpha_i)_{1,p}\\ 
    (b_j,B_j,\beta_j)_{1,q}&(1-\gamma-\rho,\mu,1)&(1-\beta-\rho,\mu,1)
  \end{matrix}\right.
\Bigg].
\end{eqnarray*}
\end{corollary}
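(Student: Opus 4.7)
My plan is to derive Corollary \ref{c4.1} from Theorem \ref{t4.1} via the connection (\ref{1.20}) between the Saigo and MSM differential operators. The second identity in (\ref{1.20}) reads $\mathcal{D}_{0+}^{0,\alpha',\beta,\beta',\gamma}f = \mathcal{D}_{0+}^{\gamma,\alpha'-\gamma,\beta'-\gamma}f$, and its right-hand side depends on only three parameters, so the MSM operator on the left is independent of its ``$\beta$''-slot. Consequently, for any $\eta\in\mathbb{C}$,
\[
\mathcal{D}_{0+}^{\alpha,\beta,\gamma} \;=\; \mathcal{D}_{0+}^{0,\,\alpha+\beta,\,\eta,\,\alpha+\gamma,\,\alpha},
\]
and I will apply Theorem \ref{t4.1} with these MSM parameters, then exploit the freedom in $\eta$ to collapse both the extra parameters and the hypothesis to the form stated in the corollary.

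With these specializations, the exponent in (\ref{4.1}) becomes $0+(\alpha+\beta)-\alpha+\rho-1=\beta+\rho-1$, matching the corollary. The three extra upper parameters introduced by Theorem \ref{t4.1} specialize to $(1-\rho,\mu,1)$, $(1+\eta-\rho,\mu,1)$, $(1-\alpha-\beta-\gamma-\rho,\mu,1)$, while the three extra lower parameters become $(1+\eta-\rho,\mu,1)$, $(1-\beta-\rho,\mu,1)$, $(1-\gamma-\rho,\mu,1)$. From the definition (\ref{n1.1}) of $\chi(s)$, the upper extras (which lie among the first $n+3$ upper slots) contribute $\Gamma^{1}$-factors to the numerator of the enlarged kernel, while the lower extras (which lie beyond the first $m$ lower slots) contribute $\Gamma^{1}$-factors to the denominator. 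The common slot $(1+\eta-\rho,\mu,1)$ therefore cancels exactly, and $I^{m,n+3}_{p+3,q+3}$ reduces to $I^{m,n+2}_{p+2,q+2}$ with precisely the parameters displayed in the statement.

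Finally, the hypothesis of Theorem \ref{t4.1} becomes $\operatorname{Re}(\rho)>\max\{0,\operatorname{Re}(\eta),\operatorname{Re}(-\alpha-\beta-\gamma)\}$; choosing, say, $\eta=0$ reduces this to the condition stated in the corollary. The one nontrivial point I expect to have to verify carefully is that the redundant pair $(1+\eta-\rho,\mu,1)$ really occupies a ``numerator'' slot in the upper row and a ``denominator'' slot in the lower row of $\chi$, with matching exponent $1$, so as to cancel cleanly; this is a direct bookkeeping check on which slots are upper versus lower in the Mellin--Barnes representation of the enlarged $I$-function, and once it is dispatched the rest of the argument is algebraic.
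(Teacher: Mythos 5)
Your proposal is correct and follows the route the paper intends: the corollary is obtained by specializing Theorem \ref{t4.1} through the relation (\ref{1.20}), i.e.\ taking MSM parameters $(0,\alpha+\beta,\eta,\alpha+\gamma,\alpha)$, noting the redundant upper/lower pair $(1+\eta-\rho,\mu,1)$ cancels in the Mellin--Barnes kernel (both carry exponent $1$, one in a numerator slot among the first $n+3$ upper parameters and one in a denominator slot beyond the first $m$ lower parameters), and choosing $\eta=0$ to recover the stated hypothesis. Your slot-by-slot verification of the exponent, the surviving parameters, and the convergence condition matches the paper's statement exactly.
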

\begin{corollary}
Let $\alpha,\gamma,\rho\in\mathbb{C}$ be such that $\mu>0$ and $\operatorname{Re}\left(\rho\right)>\max\{0$, $\operatorname{Re}(-\alpha-\gamma)\}$. Then the left-hand sided Erd\'elyi-Kober fractional differentiation $\mathcal{D}_{\gamma,\alpha}^{+}$ $(=\mathcal{D}_{0+}^{\alpha,0,\gamma})$ of the $I$-function is given for $x>0$ by
\begin{eqnarray*}\label{4.5}
&&\left(\mathcal{D}_{\gamma,\alpha}^{+}\left(t^{\rho-1}I^{m,n}_{p,q}\Bigg[at^{\mu}\left|
\begin{matrix}
    (a_i,A_i,\alpha_i)_{1,p}\\ 
    (b_j,B_j,\beta_j)_{1,q}
  \end{matrix}
\right.\Bigg]\right)\right)(x)\nonumber\\
&&\ \ \ \ \ \ \ \ \ \ \ \ \ \ \ \ \ \ \ \ \ \ \ \ \ \ \ \ \ =x^{\rho-1}I^{m,n+1}_{p+1,q+1}\Bigg[ax^{\mu}\left|
\begin{matrix}
    (1-\alpha-\gamma-\rho,\mu,1)&(a_i,A_i,\alpha_i)_{1,p}\\ 
    (b_j,B_j,\beta_j)_{1,q}&(1-\gamma-\rho,\mu,1)
  \end{matrix}\right.
\Bigg].
\end{eqnarray*}
\end{corollary}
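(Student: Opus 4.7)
The plan is to obtain this corollary as a direct specialization of Corollary 4.1 with $\beta=0$, since by the definition adopted in the paper $\mathcal{D}_{\gamma,\alpha}^{+}=\mathcal{D}_{0+}^{\alpha,0,\gamma}$, that is, the Erd\'elyi--Kober derivative is the Saigo derivative with middle index zero. Setting $\beta=0$ in the hypothesis $\operatorname{Re}(\rho)>\max\{0,\operatorname{Re}(-\alpha-\beta-\gamma)\}$ of Corollary 4.1 immediately yields the required hypothesis $\operatorname{Re}(\rho)>\max\{0,\operatorname{Re}(-\alpha-\gamma)\}$, and the prefactor $x^{\beta+\rho-1}$ collapses to $x^{\rho-1}$, matching the claim.

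Next I would plug $\beta=0$ into the $I$-function on the right-hand side of Corollary 4.1 and inspect the parameter array. The top row becomes
\[
(1-\rho,\mu,1),\;(1-\alpha-\gamma-\rho,\mu,1),\;(a_i,A_i,\alpha_i)_{1,p},
\]
and the bottom row becomes
\[
(b_j,B_j,\beta_j)_{1,q},\;(1-\gamma-\rho,\mu,1),\;(1-\rho,\mu,1).
\]
The triple $(1-\rho,\mu,1)$ now appears as the first entry of the top row, where (since it lies within the first $n+2$ top entries) it contributes a factor $\Gamma(1-(1-\rho)-\mu s)=\Gamma(\rho-\mu s)$ to the numerator of the integrand $\chi$ in \eqref{n1.1}. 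The same triple $(1-\rho,\mu,1)$ also appears as the last entry of the bottom row, where (since it sits in a position with index $>m$) it contributes the identical factor $\Gamma(\rho-\mu s)$ to the denominator of $\chi$. These two $\Gamma$-factors cancel by the standard cancellation/reduction rule for the $I$-function, which simultaneously deletes one entry from the top and one from the bottom and drops both $p+2\to p+1$ and $q+2\to q+1$, while leaving $m$ and $n+1$ in place.

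The residual parameters after this cancellation are exactly the ones stated in the corollary, so the identity follows. The only point needing verification is the legitimacy of the cancellation, i.e.\ that the shared triple genuinely occupies a numerator slot on top and a denominator slot on bottom; this is automatic from $n\ge 0$ and $m\le q$, so no real obstacle arises. (One could instead reprove the result directly from the Mellin--Barnes representation \eqref{1.2} using Lemma \ref{ll4.1}(a) with $\alpha'=\beta=\beta'=0$, mirroring the proof of Theorem \ref{t4.1}, but this merely duplicates the argument already used in Corollary 4.1.)
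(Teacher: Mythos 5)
Your proposal is correct and follows essentially the same route as the paper, which presents this corollary as an immediate specialization of Corollary \ref{c4.1} (itself a case of Theorem \ref{t4.1}): setting $\beta=0$ there and cancelling the common triple $(1-\rho,\mu,1)$ appearing in the numerator group of the upper parameters and the denominator group of the lower parameters reduces $I^{m,n+2}_{p+2,q+2}$ to the stated $I^{m,n+1}_{p+1,q+1}$, with the hypothesis and the power $x^{\rho-1}$ matching. You merely make explicit the parameter-cancellation step that the paper leaves as ``follows immediately.''
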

\noindent The next theorem yields the right-hand sided MSM fractional derivative of the $I$-function.
\begin{theorem}\label{t4.2}
Let $\alpha,\alpha',\beta,\beta',\gamma,\rho,a\in\mathbb{C}$ be such that $\mu>0$ and $\operatorname{Re}\left(\rho\right)>\max\{\operatorname{Re}(-\beta'),$ $\operatorname{Re}(\alpha'+\beta-\gamma),\operatorname{Re}(\alpha+\alpha'-\gamma)+[\operatorname{Re}(\gamma)]+1,\}$. Then 
\begin{eqnarray}\label{4.6}
&&\left(\mathcal{D}_{-}^{\alpha,\alpha',\beta,\beta',\gamma}\left(t^{-\rho}I^{m,n}_{p,q}\Bigg[at^{-\mu}\left|
\begin{matrix}
    (a_i,A_i,\alpha_i)_{1,p}\\ 
    (b_j,B_j,\beta_j)_{1,q}
  \end{matrix}
\right.\Bigg]\right)\right)(x)\nonumber\\
&&\ \ \ \ \ \ \ \ \ \ \ \ \ =x^{\alpha+\alpha'-\gamma-\rho}I^{m,n+3}_{p+3,q+3}\Bigg[ax^{-\mu}\left|
\begin{matrix}
    (1-\beta'-\rho,\mu,1)&(1+\alpha+\alpha'-\gamma-\rho,\mu,1)\\ 
    (b_j,B_j,\beta_j)_{1,q}&(1-\rho,\mu,1)
  \end{matrix}\right.\nonumber\\
&&\ \ \ \ \ \ \ \ \ \ \ \ \ \ \ \ \ \ \ \ \ \ \ \ \ 
\begin{matrix}
   (1+\alpha'+\beta-\gamma-\rho,\mu,1)&(a_i,A_i,\alpha_i)_{1,p}\\ 
   (1+\alpha'-\beta'-\rho,\mu,1)&(1+\alpha+\alpha'+\beta-\gamma-\rho,\mu,1)
  \end{matrix}
\Bigg],
\end{eqnarray}
for $x>0$.
\end{theorem}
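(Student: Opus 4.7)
The plan is to mimic the strategy used for Theorems \ref{t3.1}, \ref{t3.2}, and \ref{t4.1}: convert the $I$-function on the left-hand side of (\ref{4.6}) into its Mellin-Barnes contour integral, swap the order of the contour integral with the fractional differential operator, apply the newly established Lemma \ref{ll4.1}(b) to the resulting power-function integrand, and finally reassemble the answer as an $I$-function via (\ref{1.2}).

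First I would substitute (\ref{1.2}) into the argument, writing
\begin{equation*}
I^{m,n}_{p,q}\Bigg[at^{-\mu}\left|\begin{matrix}(a_i,A_i,\alpha_i)_{1,p}\\ (b_j,B_j,\beta_j)_{1,q}\end{matrix}\right.\Bigg]=\frac{1}{2\pi i}\int_{C}\chi(s)(at^{-\mu})^{-s}\,ds,
\end{equation*}
with $\chi(s)$ as in (\ref{n1.1}), so that the left-hand side of (\ref{4.6}) becomes
\begin{equation*}
\left(\mathcal{D}_{-}^{\alpha,\alpha',\beta,\beta',\gamma}\left(t^{-\rho}\frac{1}{2\pi i}\int_{C}\chi(s)(at^{-\mu})^{-s}\,ds\right)\right)(x).
\end{equation*}
Under the absolute convergence hypothesis stated in Section 2 (which is in force throughout), Fubini's theorem justifies interchanging the $s$-integration with the iterated integral/differentiation that defines $\mathcal{D}_{-}^{\alpha,\alpha',\beta,\beta',\gamma}$, reducing the problem to evaluating $\mathcal{D}_{-}^{\alpha,\alpha',\beta,\beta',\gamma}t^{-(\rho-\mu s)}$ at $x$ along the contour $\operatorname{Re}(s)=0$.

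Next I would invoke Lemma \ref{ll4.1}(b) with $\rho$ replaced by $\rho-\mu s$. The hypothesis $\operatorname{Re}(\rho)>\max\{\operatorname{Re}(-\beta'),\operatorname{Re}(\alpha'+\beta-\gamma),\operatorname{Re}(\alpha+\alpha'-\gamma)+[\operatorname{Re}(\gamma)]+1\}$ together with $\operatorname{Re}(s)=0$ ensures that $\rho-\mu s$ satisfies the conditions of Lemma \ref{ll4.1}(b) for every $s$ on the contour, and yields
\begin{equation*}
\left(\mathcal{D}_{-}^{\alpha,\alpha',\beta,\beta',\gamma}t^{-(\rho-\mu s)}\right)(x)=\chi_4(s)\,x^{\alpha+\alpha'-\gamma-\rho+\mu s},
\end{equation*}
where
\begin{equation*}
\chi_4(s)=\frac{\Gamma(\beta'+\rho-\mu s)\Gamma(-\alpha-\alpha'+\gamma+\rho-\mu s)\Gamma(-\alpha'-\beta+\gamma+\rho-\mu s)}{\Gamma(\rho-\mu s)\Gamma(-\alpha'+\beta'+\rho-\mu s)\Gamma(-\alpha-\alpha'-\beta+\gamma+\rho-\mu s)}.
\end{equation*}
Pulling out the $x$-power and collecting, the expression collapses to
\begin{equation*}
x^{\alpha+\alpha'-\gamma-\rho}\,\frac{1}{2\pi i}\int_{C}\chi(s)\chi_4(s)(ax^{-\mu})^{-s}\,ds.
\end{equation*}

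Finally, I would read off the six gamma factors in $\chi_4(s)$ as the six new parameter triples $(1-\beta'-\rho,\mu,1)$, $(1+\alpha+\alpha'-\gamma-\rho,\mu,1)$, $(1+\alpha'+\beta-\gamma-\rho,\mu,1)$ in the numerator (contributing to the upper row of the $I$-symbol, shifted into the first three ``$a_i$''-slots) and $(1-\rho,\mu,1)$, $(1+\alpha'-\beta'-\rho,\mu,1)$, $(1+\alpha+\alpha'+\beta-\gamma-\rho,\mu,1)$ in the denominator (contributing to the lower row), then re-express the contour integral in the form (\ref{1.2}) to identify it with the $I^{m,n+3}_{p+3,q+3}$ written on the right-hand side of (\ref{4.6}). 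The main (and essentially only nontrivial) obstacle is bookkeeping: one must carefully match which gamma-ratios are reflected to numerator versus denominator when passed through (\ref{n1.1}), so that the three new pairs are placed among the $(a_i,A_i,\alpha_i)$ block rather than the $(b_j,B_j,\beta_j)$ block, explaining the shift from $(m,n)$ to $(m,n+3)$ and from $(p,q)$ to $(p+3,q+3)$. The convergence conditions for the resulting contour integral follow from those assumed for the original $I$-function, just as in the proof of Theorem \ref{t4.1}.
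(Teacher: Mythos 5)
Your proposal is correct and follows essentially the same route as the paper's proof: expand the $I$-function via its Mellin--Barnes representation (\ref{1.2}), interchange the contour integration with $\mathcal{D}_{-}^{\alpha,\alpha',\beta,\beta',\gamma}$, apply Lemma \ref{ll4.1}(b) with $\rho$ replaced by $\rho-\mu s$ to produce the same factor $\chi_4(s)$, and reassemble via (\ref{1.2}). Your added remarks on checking the lemma's hypotheses along $\operatorname{Re}(s)=0$ and on the placement of the new parameter triples are just a more explicit rendering of the steps the paper leaves implicit.
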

\begin{proof}
Using (\ref{l4.2}) and the definition of $I$-function (\ref{1.2}), the lhs of (\ref{4.6}) equals
\begin{eqnarray*}
&&\left(\mathcal{D}_{-}^{\alpha,\alpha',\beta,\beta',\gamma}\left(t^{-\rho}\frac{1}{2\pi i}\int_{C}\chi(s)(at^{-\mu})^{-s}\,ds\right)\right)(x)\\
&&\ \ \ \ \ \ \ \ \ \ \ \ \ \ \ \ \ \ \ \ \ \ \ \ \ \ \ \ \ \ \ \ \ \ \ =\frac{1}{2\pi i}\int_{C}\chi(s)a^{-s}\left(\mathcal{D}_{-}^{\alpha,\alpha',\beta,\beta',\gamma}t^{-(\rho-\mu s)}\right)(x)\,ds\\
&&\ \ \ \ \ \ \ \ \ \ \ \ \ \ \ \ \ \ \ \ \ \ \ \ \ \ \ \ \ \ \ \ \ \ \ =x^{\alpha+\alpha'-\gamma-\rho}\frac{1}{2\pi i}\int_{C}\chi(s)\chi_4(s)(ax^{-\mu})^{-s}\,ds,
\end{eqnarray*}
where $\chi(s)$ is given by (\ref{n1.1}) and 
\begin{equation*}
\chi_4(s)=\frac{\Gamma\left(\beta'+\rho-\mu s\right)\Gamma\left(-\alpha-\alpha'+\gamma+\rho-\mu s\right)\Gamma\left(-\alpha'-\beta+\gamma+\rho-\mu s\right)}{\Gamma\left(\rho-\mu s\right)\Gamma\left(-\alpha'+\beta'+\rho-\mu s\right)\Gamma\left(-\alpha-\alpha'-\beta+\gamma+\rho-\mu s\right)}.
\end{equation*}
Thus the theorem is proved using (\ref{1.2}).
\end{proof}
\begin{corollary}\label{c4.4}
Let $\alpha,\beta,\gamma,\rho,a\in\mathbb{C}$ be such that $\mu>0$ and $\operatorname{Re}\left(\rho\right)>\max\{\operatorname{Re}(-\alpha-\gamma),$ $\operatorname{Re}(\beta)+[\operatorname{Re}(\alpha)]+1\}$. Then the right-hand sided generalized fractional differentiation $\mathcal{D}_{-}^{\alpha,\beta,\gamma}$ of $I$-function is given for $x>0$ by
\begin{eqnarray*}\label{4.11}
&&\left(\mathcal{D}_{-}^{\alpha,\beta,\gamma}\left(t^{-\rho}I^{m,n}_{p,q}\Bigg[at^{-\mu}\left|
\begin{matrix}
    (a_i,A_i,\alpha_i)_{1,p}\\ 
    (b_j,B_j,\beta_j)_{1,q}
  \end{matrix}
\right.\Bigg]\right)\right)(x)\nonumber\\
&&=x^{\beta-\rho}I^{m,n+2}_{p+2,q+2}\Bigg[ax^{-\mu}\left|
\begin{matrix}
    (1+\beta-\rho,\mu,1)&(1-\alpha-\gamma-\rho,\mu,1)&(a_i,A_i,\alpha_i)_{1,p}\\ 
    (b_j,B_j,\beta_j)_{1,q}&(1-\rho,\mu,1)&(1+\beta-\gamma-\rho,\mu,1)
  \end{matrix}\right.\Bigg].
\end{eqnarray*}
\end{corollary}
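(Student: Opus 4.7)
The plan is to derive Corollary \ref{c4.4} as a direct specialization of Theorem \ref{t4.2}, using the conversion identity (\ref{1.20}) that links the right-hand sided Saigo derivative to the right-hand sided MSM derivative. Concretely, the second identity of (\ref{1.20}) says
\[
\left(\mathcal{D}_{-}^{0,\alpha',\beta,\beta',\gamma}f\right)(x)=\left(\mathcal{D}_{-}^{\gamma,\alpha'-\gamma,\beta'-\gamma}f\right)(x),
\]
and for the Saigo operator $\mathcal{D}_{-}^{\alpha,\beta,\gamma}$ in the corollary (with Saigo parameters that I'll temporarily rename as $(\alpha_S,\beta_S,\gamma_S)$) this matches the MSM parameters $\alpha_{MSM}=0$, $\gamma_{MSM}=\alpha_S$, $\alpha'_{MSM}=\alpha_S+\beta_S$, and $\beta'_{MSM}=\alpha_S+\gamma_S$, while $\beta_{MSM}$ is free.

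First I would substitute these choices directly into (\ref{4.6}). The power factor reduces as $x^{\alpha+\alpha'-\gamma-\rho}=x^{0+(\alpha_S+\beta_S)-\alpha_S-\rho}=x^{\beta_S-\rho}$, which is exactly the prefactor demanded by the corollary. Next I would substitute into each of the six new parameter triples appearing in the $I$-function: five of them simplify cleanly to $(1-\alpha_S-\gamma_S-\rho,\mu,1)$, $(1+\beta_S-\rho,\mu,1)$, $(1-\rho,\mu,1)$, $(1+\beta_S-\gamma_S-\rho,\mu,1)$, which are the four triples listed in the corollary. The remaining two triples, namely $(1+\alpha'+\beta-\gamma-\rho,\mu,1)$ (from the top) and $(1+\alpha+\alpha'+\beta-\gamma-\rho,\mu,1)$ (from the bottom), both collapse to $(1+\beta_S+\beta_{MSM}-\rho,\mu,1)$ after substitution, because the specialization $\alpha_{MSM}=0$ forces them to coincide.

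The crucial observation is then that this common triple appears once as an upper parameter (producing a $\Gamma^{1}$ in the numerator of the Mellin-Barnes integrand) and once as a lower parameter (producing an identical $\Gamma^{1}$ in the denominator), so the two factors cancel. Removing them reduces the order of the $I$-function from $I^{m,n+3}_{p+3,q+3}$ to $I^{m,n+2}_{p+2,q+2}$, leaving precisely the expression in the corollary. I would verify this cancellation directly at the level of $\chi_4(s)$: under the substitution, $\Gamma(-\alpha'-\beta+\gamma+\rho-\mu s)$ in the numerator and $\Gamma(-\alpha-\alpha'-\beta+\gamma+\rho-\mu s)$ in the denominator become the same gamma factor.

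Finally I would check that the hypotheses of Theorem \ref{t4.2} can be satisfied. Under the substitution, the three conditions on $\operatorname{Re}(\rho)$ become $\operatorname{Re}(-\alpha_S-\gamma_S)$, $\operatorname{Re}(\beta_S+\beta_{MSM})$, and $\operatorname{Re}(\beta_S)+[\operatorname{Re}(\alpha_S)]+1$. The first and third match the corollary's assumptions exactly; the middle one involves the free parameter $\beta_{MSM}$, which can be chosen with $\operatorname{Re}(\beta_{MSM})$ sufficiently negative so that this condition is automatically subsumed by the others. Since the final identity is independent of $\beta_{MSM}$, such a choice is harmless. The only subtlety in the argument is this last book-keeping step verifying that the parameter cancellation and the hypothesis matching are mutually consistent; the rest is a direct substitution into Theorem \ref{t4.2}.
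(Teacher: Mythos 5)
Your proposal is correct and follows exactly the route the paper intends: the paper gives no separate proof, presenting Corollary \ref{c4.4} as an immediate specialization of Theorem \ref{t4.2} via the reduction (\ref{1.20}), which is precisely what you carry out. Your explicit verification of the cancellation of the coinciding gamma factors (reducing $I^{m,n+3}_{p+3,q+3}$ to $I^{m,n+2}_{p+2,q+2}$) and your handling of the free parameter $\beta_{MSM}$ in the hypotheses are correct fillings-in of details the paper leaves implicit.
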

\begin{corollary}\label{c4.6}
Let $\alpha,\gamma,\rho,a\in\mathbb{C}$ be such that $\mu>0$ and $\operatorname{Re}\left(\rho\right)>\max\{[\operatorname{Re}(\alpha)]+1,$ $\operatorname{Re}(-\alpha-\gamma)\}$. Then the right-hand sided Erd\'elyi-Kober fractional differentiation $\mathcal{D}_{\gamma,\alpha}^{-}$ $(=\mathcal{D}_{-}^{\alpha,0,\gamma})$ of $I$-function is given for $x>0$ by
\begin{eqnarray*}\label{4.13}
&&\left(\mathcal{D}_{\gamma,\alpha}^{-}\left(t^{-\rho}I^{m,n}_{p,q}\Bigg[at^{-\mu}\left|
\begin{matrix}
    (a_i,A_i,\alpha_i)_{1,p}\\ 
    (b_j,B_j,\beta_j)_{1,q}
  \end{matrix}
\right.\Bigg]\right)\right)(x)\nonumber\\
&&\ \ \ \ \ \ \ \ \ \ \ \ \ \ \ \ \ \ \ \ \ \ \ \ \ \ \ \ =x^{-\rho}I^{m,n+1}_{p+1,q+1}\Bigg[ax^{-\mu}\left|
\begin{matrix}
    (1-\alpha-\gamma-\rho,\mu,1)&(a_i,A_i,\alpha_i)_{1,p}\\ 
    (b_j,B_j,\beta_j)_{1,q}&(1-\gamma-\rho,\mu,1)
  \end{matrix}\right.\Bigg].
\end{eqnarray*}
\end{corollary}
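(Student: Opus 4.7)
The plan is to deduce Corollary 4.6 as a direct specialization of Corollary \ref{c4.4}, using the identification $\mathcal{D}_{\gamma,\alpha}^{-}=\mathcal{D}_{-}^{\alpha,0,\gamma}$ built into the hypothesis. Setting the Saigo parameter $\beta=0$ in Corollary \ref{c4.4} converts the factor $x^{\beta-\rho}$ into $x^{-\rho}$, and converts the five extra parameter triples introduced by the Saigo operator into
\begin{equation*}
(1-\rho,\mu,1),\quad (1-\alpha-\gamma-\rho,\mu,1),\quad (a_i,A_i,\alpha_i)_{1,p}
\end{equation*}
in the numerator row and
\begin{equation*}
(b_j,B_j,\beta_j)_{1,q},\quad (1-\rho,\mu,1),\quad (1-\gamma-\rho,\mu,1)
\end{equation*}
in the denominator row. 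The two occurrences of $(1-\rho,\mu,1)$ produce identical $\Gamma$-factors in the numerator and denominator of the Mellin-Barnes integrand $\chi(s)$ in (\ref{n1.1}), and these cancel; after accounting for the cancellation, the index of the resulting $I$-function collapses from $I^{m,n+2}_{p+2,q+2}$ to $I^{m,n+1}_{p+1,q+1}$, leaving precisely the parameter configuration stated in Corollary 4.6.

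For the hypothesis, the condition from Corollary \ref{c4.4} reads $\operatorname{Re}(\rho)>\max\{\operatorname{Re}(-\alpha-\gamma),\operatorname{Re}(\beta)+[\operatorname{Re}(\alpha)]+1\}$, which, on setting $\beta=0$, becomes $\operatorname{Re}(\rho)>\max\{[\operatorname{Re}(\alpha)]+1,\operatorname{Re}(-\alpha-\gamma)\}$, matching the stated condition; $\mu>0$ is preserved unchanged. Thus no extra convergence verification is needed beyond what Corollary \ref{c4.4} already provides.

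The main (mild) obstacle is the bookkeeping in verifying that the pair of redundant $(1-\rho,\mu,1)$ triples really appears in matching positions of the numerator/denominator rows of the $I$-symbol, so that the cancellation is legitimate within the convention of (\ref{1.2})--(\ref{n1.1}). One handles this by expanding $\chi(s)$ explicitly after the Saigo $\beta=0$ substitution and observing the literal $\Gamma(1-\rho+\mu s)/\Gamma(1-\rho+\mu s)$ factor, which justifies the reduction of the order and yields the claimed formula. An entirely parallel alternative proof would start from Theorem \ref{t4.2}, use (\ref{1.20}) to set $\alpha\to 0,\ \alpha'\to\alpha+0,\ \beta'\to\alpha+\gamma,\ \gamma\to\alpha$, and carry out the analogous cancellations directly on the MSM formula.
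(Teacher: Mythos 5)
Your proposal is correct and takes essentially the route the paper intends: Corollary \ref{c4.6} is the $\beta=0$ specialization of Corollary \ref{c4.4} (equivalently of Theorem \ref{t4.2} via \eqref{1.20}), with the coincident upper/lower triples $(1-\rho,\mu,1)$ cancelling in $\chi(s)$ so that $I^{m,n+2}_{p+2,q+2}$ collapses to $I^{m,n+1}_{p+1,q+1}$, and the hypothesis specializes exactly as you state; the paper gives no separate proof beyond this. One cosmetic slip: under the convention \eqref{n1.1} the cancelling factor is $\Gamma(\rho-\mu s)/\Gamma(\rho-\mu s)$ rather than $\Gamma(1-\rho+\mu s)/\Gamma(1-\rho+\mu s)$, which does not affect the argument.
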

\begin{remark}
Theorems \ref{t4.1} and \ref{t4.2} respectively follow from Theorems \ref{t3.1} and \ref{t3.2}, in view of Remark \ref{r4.1}.  
\end{remark}

\section{The Caputo-type MSM fractional differentiation of $I$-function}
\setcounter{equation}{0}
Since the Riemann-Liouville derivatives have some drawbacks while applying to real world problems, especially in the context of initial conditions, the Caputo derivative is being extensively used in applications as the initial conditions have physically significance. Rao \textit{et al.} \cite{Rao15}, introduced Caputo-type fractional derivative, which involve Gauss hypergeometric function in the kernel. For $\alpha,\beta,\gamma\in\mathbb{C}$ and $x>0$ with $\operatorname{Re}(\alpha)>0$, the left- and right-hand sided Caputo fractional differential operators associated with Gauss hypergeometric function are defined by
\begin{equation}\label{c5.1}
\left({}^{c}\mathcal{D}_{0+}^{\alpha,\beta,\gamma}f\right)(x)=\left(\mathcal{I}_{0+}^{-\alpha+[\operatorname{Re}(\alpha)]+1,-\beta-[\operatorname{Re}(\alpha)]-1,\alpha+\gamma-[\operatorname{Re}(\alpha)]-1}f^{([\operatorname{Re}(\alpha)]+1)}\right)(x)
\end{equation}
and
\begin{equation}\label{c5.2}
\left({}^{c}\mathcal{D}_{-}^{\alpha,\beta,\gamma}f\right)(x)=(-1)^{[\operatorname{Re}(\alpha)]+1}\left(\mathcal{I}_{-}^{-\alpha+[\operatorname{Re}(\alpha)]+1,-\beta-[\operatorname{Re}(\alpha)]-1,\alpha+\gamma}f^{\left([\operatorname{Re}(\alpha)]+1\right)}\right)(x),
\end{equation}
where $f^{(n)}$ denotes the $n$-th derivative of $f$. The relation between the Caputo-type MSM fractional derivative and the MSM fractional derivative is same as the relation between the Caputo fractional derivative and the Riemann-Liouville fractional derivative.\\
\noindent For $\alpha,\alpha',\beta,\beta',\gamma\in\mathbb{C}$ and $x>0$ with $\operatorname{Re}(\gamma)>0$, we define the left- and right-hand sided Caputo-type MSM fractional differential operators associated with third Appell function as
\begin{equation}\label{5.1}
\left({}^{c}\mathcal{D}_{0+}^{\alpha,\alpha',\beta,\beta',\gamma}f\right)(x)=\left(\mathcal{I}_{0+}^{-\alpha',-\alpha,-\beta'+[\operatorname{Re}(\gamma)]+1,-\beta,-\gamma+[\operatorname{Re}(\gamma)]+1}f^{([\operatorname{Re}(\gamma)]+1)}\right)(x)
\end{equation}
and
\begin{equation}\label{5.2}
\left({}^{c}\mathcal{D}_{-}^{\alpha,\alpha',\beta,\beta',\gamma}f\right)(x)=\left(-1\right)^{[\operatorname{Re}(\gamma)]+1}\left(\mathcal{I}_{-}^{-\alpha',-\alpha,-\beta',-\beta+[\operatorname{Re}(\gamma)]+1,-\gamma+[\operatorname{Re}(\gamma)]+1}f^{({[\operatorname{Re}(\gamma)]+1})}\right)(x),
\end{equation}
respectively. The fractional operators (\ref{5.1}) and (\ref{5.2}) are connected to (\ref{c5.1}) and (\ref{c5.2}) as follows:
\begin{equation}\label{c1.20}
\left({}^{c}\mathcal{D}_{0+}^{0,\alpha',\beta,\beta',\gamma}f\right)(x)=\left({}^{c}\mathcal{D}_{0+}^{\gamma,\alpha'-\gamma,\beta'-\gamma}f\right)(x),\ \ \ \ 
\left({}^{c}\mathcal{D}_{-}^{0,\alpha',\beta,\beta',\gamma}f\right)(x)=\left({}^{c}\mathcal{D}_{-}^{\gamma,\alpha'-\gamma,\beta'-\gamma}f\right)(x).
\end{equation}
In this section, we study the Caputo-type MSM fractional differentiation of the $I$ function. First we state and prove the following lemma.

\begin{lemma}\label{ll5.1}
Let $\alpha,\alpha',\beta,\beta',\gamma,\rho\in\mathbb{C}$ and $m=[\operatorname{Re}(\gamma)]+1$.\\
\noindent(a) If $\operatorname{Re}(\rho)-m>$ $\max\{0,\operatorname{Re}(-\alpha+\beta),\operatorname{Re}(-\alpha-\alpha'-\beta'+\gamma)\}$, then
\begin{eqnarray}\label{l5.1}
&&\left({}^{c}\mathcal{D}_{0+}^{\alpha,\alpha',\beta,\beta',\gamma}t^{\rho-1}\right)(x)\nonumber\\
&&\ \ \ \ \ \ \ =\frac{\Gamma(\rho)\Gamma(\alpha-\beta+\rho-m)\Gamma(\alpha+\alpha'+\beta'-\gamma+\rho-m)}{\Gamma(-\beta+\rho-m)\Gamma(\alpha+\alpha'-\gamma+\rho)\Gamma(\alpha+\beta'-\gamma+\rho-m)}x^{\alpha+\alpha'-\gamma+\rho-1}.\nonumber\\
\end{eqnarray}
\noindent (b) If $\operatorname{Re}(\rho)+m>$ $\max\{\operatorname{Re}(-\beta'),\operatorname{Re}(\alpha'+\beta-\gamma),\operatorname{Re}(\alpha+\alpha'-\gamma)+[\operatorname{Re}(\gamma)]+1\}$, then
\begin{eqnarray}\label{l5.2}
&&\left({}^{c}\mathcal{D}_{-}^{\alpha,\alpha',\beta,\beta',\gamma}t^{-\rho}\right)(x)\nonumber\\
&&\ \ \ \ \ \ \ =\frac{\Gamma\left(\beta'+\rho+m\right)\Gamma\left(-\alpha-\alpha'+\gamma+\rho\right)\Gamma\left(-\alpha'-\beta+\gamma+\rho+m\right)}{\Gamma\left(\rho\right)\Gamma\left(-\alpha'+\beta'+\rho+m\right)\Gamma\left(-\alpha-\alpha'-\beta+\gamma+\rho+m\right)}x^{\alpha+\alpha'-\gamma-\rho}.\nonumber\\
\end{eqnarray} 
\end{lemma}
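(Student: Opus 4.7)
The plan is to reduce Lemma~5.1 to Lemma~2.1, exactly in the spirit of how Lemma~4.1 was obtained from Lemma~2.1 via the Riemann–Liouville-type definitions (1.15)–(1.16). The only structural difference is that in the Caputo definitions (5.1)–(5.2) the $m$-th derivative sits \emph{inside} the fractional integral, acting directly on the test function. Applied to a monomial this is trivial: $\frac{d^m}{dt^m}t^{\rho-1}=\Gamma(\rho)/\Gamma(\rho-m)\,t^{\rho-m-1}$ and $\frac{d^m}{dt^m}t^{-\rho}=(-1)^m\Gamma(\rho+m)/\Gamma(\rho)\,t^{-(\rho+m)}$. After that step the problem collapses to a single application of Lemma~2.1 with shifted parameters.

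For part~(a), set $m=[\operatorname{Re}(\gamma)]+1$, differentiate $t^{\rho-1}$ as above, and then apply $\mathcal{I}_{0+}^{-\alpha',-\alpha,-\beta'+m,-\beta,-\gamma+m}$ using Lemma~2.1(a) with the substitution $\alpha\mapsto-\alpha',\ \alpha'\mapsto-\alpha,\ \beta\mapsto-\beta'+m,\ \beta'\mapsto-\beta,\ \gamma\mapsto-\gamma+m$ and $\rho\mapsto\rho-m$. A direct simplification of each of the six $\Gamma$-arguments produced by Lemma~2.1(a) gives $\Gamma(\rho-m)$, $\Gamma(\alpha-\beta+\rho-m)$, $\Gamma(\alpha+\alpha'+\beta'-\gamma+\rho-m)$ in the numerator and $\Gamma(-\beta+\rho-m)$, $\Gamma(\alpha+\alpha'-\gamma+\rho)$, $\Gamma(\alpha+\beta'-\gamma+\rho-m)$ in the denominator, while the exponent on $x$ becomes $\alpha+\alpha'-\gamma+\rho-1$. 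Cancelling $\Gamma(\rho-m)$ against the prefactor $\Gamma(\rho)/\Gamma(\rho-m)$ yields (5.3). The hypothesis $\operatorname{Re}(\rho)-m>\max\{0,\operatorname{Re}(-\alpha+\beta),\operatorname{Re}(-\alpha-\alpha'-\beta'+\gamma)\}$ translates, under the same substitution, to the hypothesis required by Lemma~2.1(a).

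For part~(b), apply the differentiation formula above and note the double sign cancellation: the $(-1)^m$ prefactor in the definition (5.2) cancels the $(-1)^m$ from differentiating $t^{-\rho}$. Now apply $\mathcal{I}_{-}^{-\alpha',-\alpha,-\beta',-\beta+m,-\gamma+m}$ to $t^{-(\rho+m)}$ using Lemma~2.1(b) with $\alpha\mapsto-\alpha',\ \alpha'\mapsto-\alpha,\ \beta\mapsto-\beta',\ \beta'\mapsto-\beta+m,\ \gamma\mapsto-\gamma+m,\ \rho\mapsto\rho+m$. Its output is a ratio of six Gammas whose arguments simplify to exactly those in the numerator and denominator of (5.4), together with $x^{\alpha+\alpha'-\gamma-\rho}$. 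The $\Gamma(\rho+m)$ emerging from Lemma~2.1(b) cancels the $\Gamma(\rho+m)$ produced by differentiation, leaving $1/\Gamma(\rho)$ as written in (5.4). The hypothesis on $\rho$ again matches the shifted hypothesis of Lemma~2.1(b).

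Neither part presents a conceptual obstacle; the entire proof is parameter bookkeeping. The only places where a sign or a shift could slip are (i) the double $(-1)^m$ cancellation in part~(b), and (ii) the six gamma-argument simplifications, where it is essential to remember that $m$ appears inside $\beta$ (resp.\ $\beta'$) and $\gamma$ in the substitution for the $\mathcal{I}_{0+}$ (resp.\ $\mathcal{I}_{-}$) case. I would present the proof by displaying the shifted Lemma~2.1 formula once, listing the six gamma-arguments after substitution in a single aligned block, and pointing out the cancellation of $\Gamma(\rho-m)$ (resp.\ $\Gamma(\rho+m)$) against the derivative prefactor.
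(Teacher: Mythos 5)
Your proposal is correct and follows essentially the same route as the paper's proof: differentiate the monomial first (picking up $\Gamma(\rho)/\Gamma(\rho-m)$, resp.\ $(-1)^m\Gamma(\rho+m)/\Gamma(\rho)$ with the $(-1)^m$ cancelling the prefactor in the definition), and then apply Lemma 2.1(a), resp.\ (b), with the shifted parameters $-\alpha',-\alpha,-\beta'+m,-\beta,-\gamma+m$ (resp.\ $-\alpha',-\alpha,-\beta',-\beta+m,-\gamma+m$), cancelling $\Gamma(\rho-m)$ (resp.\ $\Gamma(\rho+m)$). Your gamma-argument bookkeeping and the translated hypotheses match the paper's statement exactly.
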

\begin{proof} 
(a) From (\ref{5.1}), we have
\begin{eqnarray*}
\left({}^{c}\mathcal{D}_{0+}^{\alpha,\alpha',\beta,\beta',\gamma}t^{\rho-1}\right)(x)
&=&\left(\mathcal{I}_{0+}^{-\alpha',-\alpha,-\beta'+m,-\beta,-\gamma+m}\frac{d^m}{\,dt^m}t^{\rho-1}\right)(x)\\
&=&\frac{\Gamma(\rho)}{\Gamma(\rho-m)}\left(\mathcal{I}_{0+}^{-\alpha',-\alpha,-\beta'+m,-\beta,-\gamma+m}t^{\rho-m-1}\right)(x),
\end{eqnarray*}
which on using (\ref{2.4}) gives (\ref{l5.1}).\\
\noindent (b) From (\ref{5.2}), we have
\begin{eqnarray*}\label{l5.7}
\left({}^{c}\mathcal{D}_{-}^{\alpha,\alpha',\beta,\beta',\gamma}t^{-\rho}\right)(x)
&=&\left(-1\right)^m\left(\mathcal{I}_{-}^{-\alpha',-\alpha,-\beta',-\beta+m,-\gamma+m}\frac{d^m}{\,dt^m}t^{-\rho}\right)(x)\\
&=&\frac{\Gamma(\rho+m)}{\Gamma(\rho)}\left(\mathcal{I}_{-}^{-\alpha',-\alpha,-\beta',-\beta+m,-\gamma+m}t^{-\rho-m}\right)(x),
\end{eqnarray*}
and thus (\ref{l5.2}) follows from (\ref{2.5}).
\end{proof}
\noindent We next present the left-hand sided Caputo type MSM fractional derivative of the $I$-function.
\begin{theorem}\label{t5.1}
Let $\alpha,\alpha',\beta,\beta',\gamma,\rho,a\in\mathbb{C}$, $m=[\operatorname{Re}(\gamma)]+1$ be such that $\mu>0$ and $\operatorname{Re}\left(\rho\right)-m>\max\{0,\operatorname{Re}(-\alpha+\beta),\operatorname{Re}(-\alpha-\alpha'-\beta'+\gamma)\}$. Then for $x>0$
\begin{eqnarray*}\label{5.3}
&&\left({}^{c}\mathcal{D}_{0+}^{\alpha,\alpha',\beta,\beta',\gamma}\left(t^{\rho-1}I^{m,n}_{p,q}\Bigg[at^{\mu}\left|
\begin{matrix}
    (a_i,A_i,\alpha_i)_{1,p}\\ 
    (b_j,B_j,\beta_j)_{1,q}
  \end{matrix}
\right.\Bigg]\right)\right)(x)\nonumber\\
&&\ \ \ \ =x^{\alpha+\alpha'-\gamma+\rho-1}I^{m,n+3}_{p+3,q+3}\Bigg[ax^{\mu}\left|
\begin{matrix}
    (1-\rho,\mu,1)&(1-\alpha+\beta-\rho+m,\mu,1)\\ 
    (b_j,B_j,\beta_j)_{1,q}&(1+\beta-\rho+m,\mu,1)
  \end{matrix}\right.\nonumber\\
&&\ \ \ \ \ \ \ \ \ \ \ \ \ \ \ \ \ \ 
\begin{matrix}
   (1-\alpha-\alpha'-\beta'+\gamma-\rho+m,\mu,1)&(a_i,A_i,\alpha_i)_{1,p}\\ 
   (1-\alpha-\alpha'+\gamma-\rho,\mu,1)&(1-\alpha-\beta'+\gamma-\rho+m,\mu,1)
  \end{matrix}
\Bigg].
\end{eqnarray*}
\end{theorem}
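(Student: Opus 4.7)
The plan is to follow exactly the template used in the proofs of Theorems \ref{t3.1} and \ref{t4.1}, with Lemma \ref{ll5.1}(a) playing the role that Lemma \ref{l2.1}(a) and Lemma \ref{ll4.1}(a) played before. First I would replace the $I$-function in the integrand by its Mellin-Barnes representation (\ref{1.2}), so that the left-hand side becomes
\begin{equation*}
\left({}^{c}\mathcal{D}_{0+}^{\alpha,\alpha',\beta,\beta',\gamma}\left(t^{\rho-1}\frac{1}{2\pi i}\int_{C}\chi(s)(at^{\mu})^{-s}\,ds\right)\right)(x),
\end{equation*}
with $\chi(s)$ as in (\ref{n1.1}).

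Next I would justify interchanging the Caputo-type MSM operator with the contour integral. Since the contour $C$ is the imaginary axis and $\operatorname{Re}(s)=0$ there, the hypothesis $\operatorname{Re}(\rho)-m>\max\{0,\operatorname{Re}(-\alpha+\beta),\operatorname{Re}(-\alpha-\alpha'-\beta'+\gamma)\}$ carries over verbatim to $\rho-\mu s$, so Lemma \ref{ll5.1}(a) applies uniformly in $s\in C$ to the monomial $t^{\rho-\mu s-1}$. Combined with the absolute-convergence assumption on (\ref{1.2}) stated at the end of Section 2, this legitimizes the Fubini-type swap.

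After the interchange, Lemma \ref{ll5.1}(a) yields
\begin{equation*}
\left({}^{c}\mathcal{D}_{0+}^{\alpha,\alpha',\beta,\beta',\gamma}t^{\rho-\mu s-1}\right)(x) = \chi_5(s)\,x^{\alpha+\alpha'-\gamma+\rho-1}(x^{\mu})^{-s},
\end{equation*}
where
\begin{equation*}
\chi_5(s)=\frac{\Gamma(\rho-\mu s)\,\Gamma(\alpha-\beta+\rho-m-\mu s)\,\Gamma(\alpha+\alpha'+\beta'-\gamma+\rho-m-\mu s)}{\Gamma(-\beta+\rho-m-\mu s)\,\Gamma(\alpha+\alpha'-\gamma+\rho-\mu s)\,\Gamma(\alpha+\beta'-\gamma+\rho-m-\mu s)}.
\end{equation*}
Pulling the prefactor $x^{\alpha+\alpha'-\gamma+\rho-1}$ outside the contour integral leaves $\frac{1}{2\pi i}\int_C \chi(s)\chi_5(s)(ax^{\mu})^{-s}\,ds$, and the final step is to read off the parameter list of the resulting $I$-function from (\ref{1.2})--(\ref{n1.1}): the three new $\Gamma$ factors in the numerator of $\chi_5$ contribute the three additional upper parameters $(1-\rho,\mu,1)$, $(1-\alpha+\beta-\rho+m,\mu,1)$, $(1-\alpha-\alpha'-\beta'+\gamma-\rho+m,\mu,1)$ (these are of the form $\Gamma^{1}(1-a_i-A_i s)$ after rewriting $\Gamma(c-\mu s)=\Gamma(1-(1-c)-\mu s)$), while the three new denominator $\Gamma$ factors contribute the lower parameters $(1+\beta-\rho+m,\mu,1)$, $(1-\alpha-\alpha'+\gamma-\rho,\mu,1)$, $(1-\alpha-\beta'+\gamma-\rho+m,\mu,1)$, raising the order from $(m,n;p,q)$ to $(m,n+3;p+3,q+3)$.

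The only real obstacle is clerical: correctly matching the six shifted-argument gamma factors from $\chi_5$ to the slots in (\ref{n1.1}) to recover the parameter list claimed in the statement, and verifying that none of the three new numerator poles coincides with any existing pole of $\chi(s)$ (so that the separation-of-poles hypothesis on the contour is preserved). This last point is a direct consequence of the strict inequality $\operatorname{Re}(\rho)-m>\max\{\ldots\}$, which ensures the poles of the new numerator factors lie to the right of the imaginary axis while the original geometry of $C$ is unchanged. Once these bookkeeping details are checked, formula (\ref{1.2}) applied to $\chi\chi_5$ delivers the stated identity.
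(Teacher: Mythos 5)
Your proposal is correct and is essentially identical to the paper's own proof: both expand the $I$-function via its Mellin--Barnes representation (\ref{1.2}), interchange the Caputo-type MSM operator with the contour integral, apply Lemma \ref{ll5.1}(a) to $t^{\rho-\mu s-1}$ (valid since $\operatorname{Re}(s)=0$ on $C$), and reassemble $\chi(s)\chi_5(s)$ into the order-$(m,n+3;p+3,q+3)$ $I$-function, with your $\chi_5$ and parameter bookkeeping matching the paper's exactly.
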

\begin{proof}
From (\ref{1.2}) and (\ref{l5.1}), we get
\begin{eqnarray*}
&&\left({}^{c}\mathcal{D}_{0+}^{\alpha,\alpha',\beta,\beta',\gamma}\left(t^{\rho-1}\frac{1}{2\pi i}\int_{C}\chi(s)(at^{\mu})^{-s}\,ds\right)\right)(x)\\
&&\ \ \ \ \ \ \ \ \ \ \ \ \ \ \ \ \ \ \ =\frac{1}{2\pi i}\int_{C}\chi(s)a^{-s}\left({}^{c}\mathcal{D}_{0+}^{\alpha,\alpha',\beta,\beta',\gamma}t^{\rho-\mu s-1}\right)(x)\,ds\\
&&\ \ \ \ \ \ \ \ \ \ \ \ \ \ \ \ \ \ \ =x^{\alpha+\alpha'-\gamma+\rho-1}\frac{1}{2\pi i}\int_{C}\chi(s)\chi_5(s)(ax^{\mu})^{-s}\,ds,
\end{eqnarray*}
where $\chi(s)$ is given by (\ref{n1.1}) and 
\begin{equation*}
\chi_5(s)=\frac{\Gamma\left(\rho-\mu s\right)\Gamma\left(\alpha-\beta+\rho-\mu s-m\right)\Gamma\left(\alpha+\alpha'+\beta'-\gamma+\rho-\mu s-m\right)}{\Gamma\left(-\beta+\rho-\mu s-m\right)\Gamma\left(\alpha+\alpha'-\gamma+\rho-\mu s\right)\Gamma\left(\alpha+\beta'-\gamma+\rho-\mu s-m\right)}.
\end{equation*}
The result now follows from (\ref{1.2}).
\end{proof}
\begin{corollary}\label{c5.1}
Let $\alpha,\beta,\gamma,\rho,a\in\mathbb{C}$, $m=[\operatorname{Re}(\alpha)]+1$ be such that $\mu>0$ and $\operatorname{Re}\left(\rho\right)-m>\max\{0,\operatorname{Re}(-\alpha-\beta-\gamma)\}$. The left-hand sided generalized Caputo fractional differentiation ${}^{c}\mathcal{D}_{0+}^{\alpha,\beta,\gamma}$ of the $I$-function is given for $x>0$ by
\begin{eqnarray*}\label{5.6}
&&\left({}^{c}\mathcal{D}_{0+}^{\alpha,\beta,\gamma}\left(t^{\rho-1}I^{m,n}_{p,q}\Bigg[at^{\mu}\left|
\begin{matrix}
    (a_i,A_i,\alpha_i)_{1,p}\\ 
    (b_j,B_j,\beta_j)_{1,q}
  \end{matrix}
\right.\Bigg]\right)\right)(x)\nonumber\\
&&\ \ \ \ \ \ \ \ \ \ \ \ \ \ \ \ \ \ \ =x^{\beta+\rho-1}I^{m,n+2}_{p+2,q+2}\Bigg[ax^{\mu}\left|
\begin{matrix}
    (1-\rho,\mu,1)\\ 
    (b_j,B_j,\beta_j)_{1,q}
  \end{matrix}\right.\\
&&\ \ \ \ \ \ \ \ \ \ \ \ \ \ \ \ \ \ \ \ \ \ \ \ \ \ \ \ \ \ \ \ \ \ \begin{matrix}
    (1-\alpha-\beta-\gamma-\rho+m,\mu,1)&(a_i,A_i,\alpha_i)_{1,p}\\ 
    (1-\beta-\rho,\mu,1)&(1-\gamma-\rho+m,\mu,1)
  \end{matrix}\Bigg].
\end{eqnarray*}
\end{corollary}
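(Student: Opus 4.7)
The plan is to derive Corollary \ref{c5.1} as an immediate specialization of Theorem \ref{t5.1} by means of the connection formula (\ref{c1.20}), which expresses the Saigo-type Caputo operator ${}^{c}\mathcal{D}_{0+}^{\alpha,\beta,\gamma}$ as a particular case of the MSM Caputo operator. Matching the right-hand side of (\ref{c1.20}) with the target operator, I read off the needed MSM parameters: set the first MSM parameter equal to $0$, the last (playing the role of $\gamma_{MSM}$) equal to $\alpha$, and then choose $\alpha'_{MSM}=\alpha+\beta$ and $\beta'_{MSM}=\alpha+\gamma$ so that $\alpha'_{MSM}-\gamma_{MSM}=\beta$ and $\beta'_{MSM}-\gamma_{MSM}=\gamma$. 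The fourth MSM slot $\beta_{MSM}$ is unconstrained by (\ref{c1.20}); I would keep it as a free auxiliary parameter $\beta_0$. Note that $m=[\operatorname{Re}(\gamma_{MSM})]+1=[\operatorname{Re}(\alpha)]+1$ in these substitutions, which is precisely the $m$ appearing in Corollary \ref{c5.1}.

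Next I would invoke Theorem \ref{t5.1} on $t^{\rho-1}I^{m,n}_{p,q}[at^{\mu}|\cdots]$ with this specialization. The prefactor collapses since $x^{\alpha_{MSM}+\alpha'_{MSM}-\gamma_{MSM}+\rho-1}=x^{0+(\alpha+\beta)-\alpha+\rho-1}=x^{\beta+\rho-1}$, giving the exponent stated in the corollary. Inside the $I$-function, the upper parameter $(1-\alpha_{MSM}+\beta_{MSM}-\rho+m,\mu,1)$ becomes $(1+\beta_0-\rho+m,\mu,1)$, and the lower parameter $(1+\beta_{MSM}-\rho+m,\mu,1)$ becomes the identical triple $(1+\beta_0-\rho+m,\mu,1)$. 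These two entries contribute a common $\Gamma$-factor in both numerator and denominator of $\chi(s)$ and therefore cancel at the Mellin-Barnes level; this reduction drops the order of the $I$-function from $I^{m,n+3}_{p+3,q+3}$ to $I^{m,n+2}_{p+2,q+2}$. The remaining three specialized entries simplify to $(1-\alpha-\beta-\gamma-\rho+m,\mu,1)$, $(1-\beta-\rho,\mu,1)$, and $(1-\gamma-\rho+m,\mu,1)$, reproducing the statement verbatim.

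Finally I would check that the hypotheses transfer cleanly. Theorem \ref{t5.1} asks for $\operatorname{Re}(\rho)-m>\max\{0,\operatorname{Re}(-\alpha_{MSM}+\beta_{MSM}),\operatorname{Re}(-\alpha_{MSM}-\alpha'_{MSM}-\beta'_{MSM}+\gamma_{MSM})\}$, which under the substitutions becomes $\operatorname{Re}(\rho)-m>\max\{0,\operatorname{Re}(\beta_0),\operatorname{Re}(-\alpha-\beta-\gamma)\}$. Since $\beta_0$ is free, I can fix it with $\operatorname{Re}(\beta_0)\le 0$, leaving exactly $\operatorname{Re}(\rho)-m>\max\{0,\operatorname{Re}(-\alpha-\beta-\gamma)\}$, in agreement with the corollary. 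The condition $\mu>0$ is unchanged. The only delicate point is the careful bookkeeping of parameter identifications and verifying that the cancelling pair in $\chi(s)$ does not destroy any convergence hypothesis; both are routine since the cancellation occurs at the level of integrands and the resulting $I$-function inherits the admissibility conditions of the original one. I do not anticipate a substantive obstacle beyond this clerical verification.
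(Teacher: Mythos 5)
Your derivation is correct and is essentially the paper's intended route: the corollary is obtained by specializing Theorem \ref{t5.1} through the connection (\ref{c1.20}), i.e.\ taking the MSM parameters $(0,\alpha+\beta,\beta_{0},\alpha+\gamma,\alpha)$ so that the prefactor becomes $x^{\beta+\rho-1}$ and the identical upper/lower triple $(1+\beta_{0}-\rho+m,\mu,1)$ cancels as a common gamma factor in the Mellin--Barnes integrand, reducing $I^{m,n+3}_{p+3,q+3}$ to $I^{m,n+2}_{p+2,q+2}$ with hypotheses matching after choosing $\operatorname{Re}(\beta_{0})\le 0$. (Only a trivial labelling slip: the free parameter $\beta_{\mathrm{MSM}}$ is the third, not the fourth, MSM slot, but your computations treat it correctly.)
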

\begin{corollary}\label{c5.3}
Let $\alpha,\gamma,\rho,a\in\mathbb{C}$, $m=[\operatorname{Re}(\alpha)]+1$ be such that $\mu>0$ and $\operatorname{Re}\left(\rho\right)-m>\max\{0,\operatorname{Re}(-\alpha-\gamma)\}$. Then the left-hand sided Caputo-type Erd¶elyi-Kober fractional differentiation ${}^{c}\mathcal{D}_{\gamma,\alpha}^{+}$ $(={}^{c}\mathcal{D}_{0+}^{\alpha,0,\gamma})$ of the $I$-function is given for $x>0$ by
\begin{eqnarray*}\label{5.8}
&&\left({}^{c}\mathcal{D}_{\gamma,\alpha}^{+}\left(t^{\rho-1}I^{m,n}_{p,q}\Bigg[at^{\mu}\left|
\begin{matrix}
    (a_i,A_i,\alpha_i)_{1,p}\\ 
    (b_j,B_j,\beta_j)_{1,q}
  \end{matrix}
\right.\Bigg]\right)\right)(x)\nonumber\\
&&\ \ \ \ \ \ \ \ \ \ \ \ \ \ =x^{\rho-1}I^{m,n+1}_{p+1,q+1}\Bigg[ax^{\mu}\left|
\begin{matrix}
    (1-\alpha-\gamma-\rho+m,\mu,1)&(a_i,A_i,\alpha_i)_{1,p}\\ 
    (b_j,B_j,\beta_j)_{1,q}&(1-\gamma-\rho+m,\mu,1)
  \end{matrix}\right.\Bigg].
\end{eqnarray*}
\end{corollary}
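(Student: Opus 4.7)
The plan is to derive this corollary as a direct specialization of Corollary \ref{c5.1} by setting $\beta=0$. Since by definition ${}^{c}\mathcal{D}_{\gamma,\alpha}^{+}={}^{c}\mathcal{D}_{0+}^{\alpha,0,\gamma}$, the Caputo-type Erd\'elyi--Kober operator is precisely the generalized (Saigo-type) Caputo operator of Corollary \ref{c5.1} with $\beta=0$, so plugging $\beta=0$ into that formula should produce the desired expression after a single simplification.

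First I would write out what Corollary \ref{c5.1} gives at $\beta=0$: an $I^{m,n+2}_{p+2,q+2}$ function whose upper row contains $(1-\rho,\mu,1)$ and whose lower row contains $(1-\beta-\rho,\mu,1)=(1-\rho,\mu,1)$. Reading off the Mellin--Barnes integrand $\chi(s)$ from (\ref{n1.1}), these two parameters contribute the identical Gamma factor $\Gamma(\rho-\mu s)$ to the numerator and the denominator respectively, and hence cancel outright. After this cancellation the surviving upper parameters are $(1-\alpha-\gamma-\rho+m,\mu,1)$ together with the original $(a_i,A_i,\alpha_i)_{1,p}$, and the surviving lower parameters are the original $(b_j,B_j,\beta_j)_{1,q}$ together with $(1-\gamma-\rho+m,\mu,1)$; the order of the $I$-function drops from $(m,n+2;p+2,q+2)$ to $(m,n+1;p+1,q+1)$, matching the claimed formula.

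The only bookkeeping item is the consistency of the hypotheses: setting $\beta=0$ in the constraint $\operatorname{Re}(\rho)-m>\max\{0,\operatorname{Re}(-\alpha-\beta-\gamma)\}$ of Corollary \ref{c5.1} reduces it exactly to $\operatorname{Re}(\rho)-m>\max\{0,\operatorname{Re}(-\alpha-\gamma)\}$, which is the present hypothesis, and the value $m=[\operatorname{Re}(\alpha)]+1$ is unchanged. I do not foresee any substantive obstacle: the derivation is a specialization plus a single cancellation of matching Gamma factors, entirely parallel to the way Corollary \ref{c3.3} follows from Corollary \ref{c3.1} in the integral case.
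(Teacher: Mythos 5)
Your proposal is correct and is exactly the derivation the paper intends (the corollary is left implicit there, following the same pattern by which Corollary \ref{c3.3} is obtained from Corollary \ref{c3.1}): set $\beta=0$ in Corollary \ref{c5.1}, note that the coincident upper parameter $(1-\rho,\mu,1)$ and lower parameter $(1-\beta-\rho,\mu,1)=(1-\rho,\mu,1)$ contribute the same factor $\Gamma(\rho-\mu s)$ to numerator and denominator of the Mellin--Barnes integrand and hence cancel, reducing the order to $I^{m,n+1}_{p+1,q+1}$, while the hypothesis specializes to $\operatorname{Re}(\rho)-m>\max\{0,\operatorname{Re}(-\alpha-\gamma)\}$. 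Nothing further is needed.
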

\noindent Finally, we present the right-hand sided Caputo type MSM fractional derivative of the $I$-function.
\begin{theorem}\label{t5.2}
Let $\alpha,\alpha',\beta,\beta',\gamma,\rho,a\in\mathbb{C}$, $m=[\operatorname{Re}(\gamma)]+1$ be such that $\mu>0$ and $\operatorname{Re}\left(\rho\right)+m>\max\{\operatorname{Re}(-\beta'),\operatorname{Re}(\alpha'+\beta-\gamma),\operatorname{Re}(\alpha+\alpha'-\gamma)+m\}$. Then 
\begin{eqnarray*}\label{5.9}
&&\left({}^{c}\mathcal{D}_{-}^{\alpha,\alpha',\beta,\beta',\gamma}\left(t^{-\rho}I^{m,n}_{p,q}\Bigg[at^{-\mu}\left|
\begin{matrix}
    (a_i,A_i,\alpha_i)_{1,p}\\ 
    (b_j,B_j,\beta_j)_{1,q}
  \end{matrix}
\right.\Bigg]\right)\right)(x)\nonumber\\
&&\ \ \ \ \ \ \ \ \ =x^{\alpha+\alpha'-\gamma-\rho}I^{m,n+3}_{p+3,q+3}\Bigg[ax^{-\mu}\left|
\begin{matrix}
    (1-\beta'-\rho-m,\mu,1)&(1+\alpha+\alpha'-\gamma-\rho,\mu,1)\\ 
    (b_j,B_j,\beta_j)_{1,q}&(1-\rho,\mu,1)
  \end{matrix}\right.\nonumber\\
&&\ \ \ \ \ \ \ \ \ \ \ \ \ \ \ \ \ \ \ 
\begin{matrix}
   (1+\alpha'+\beta-\gamma-\rho-m,\mu,1)&(a_i,A_i,\alpha_i)_{1,p}\\ 
   (1+\alpha'-\beta'-\rho-m,\mu,1)&(1+\alpha+\alpha'+\beta-\gamma-\rho-m,\mu,1)
  \end{matrix}
\Bigg],
\end{eqnarray*}
for $x>0$.
\end{theorem}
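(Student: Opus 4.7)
The proof plan mirrors that of Theorem~\ref{t5.1}, with Lemma~\ref{ll5.1}(b) replacing Lemma~\ref{ll5.1}(a). First I substitute the Mellin-Barnes representation (\ref{1.2}) for the $I$-function inside the argument of the Caputo-type operator, rewriting the left-hand side as
\begin{equation*}
\left({}^{c}\mathcal{D}_{-}^{\alpha,\alpha',\beta,\beta',\gamma}\left(t^{-\rho}\,\frac{1}{2\pi i}\int_{C}\chi(s)(at^{-\mu})^{-s}\,ds\right)\right)(x),
\end{equation*}
with $\chi(s)$ as in (\ref{n1.1}). Under the absolute-convergence conventions recalled in Section~2, the contour integral and the operator ${}^{c}\mathcal{D}_{-}^{\alpha,\alpha',\beta,\beta',\gamma}$ can be interchanged, reducing the computation to the action of the operator on the monomial $t^{-(\rho-\mu s)}$.

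For that monomial, Lemma~\ref{ll5.1}(b) applied with $\rho$ replaced by $\rho - \mu s$ yields
\begin{equation*}
\left({}^{c}\mathcal{D}_{-}^{\alpha,\alpha',\beta,\beta',\gamma}t^{-(\rho-\mu s)}\right)(x) = \chi_6(s)\,x^{\alpha+\alpha'-\gamma-\rho+\mu s},
\end{equation*}
where $\chi_6(s)$ is an explicit ratio of six gamma factors obtained directly from the formula in Lemma~\ref{ll5.1}(b). The hypothesis $\operatorname{Re}(\rho)+m > \max\{\operatorname{Re}(-\beta'),\operatorname{Re}(\alpha'+\beta-\gamma),\operatorname{Re}(\alpha+\alpha'-\gamma)+m\}$ in the theorem is precisely what is needed for Lemma~\ref{ll5.1}(b) to apply uniformly as $s$ runs along the contour $\operatorname{Re}(s)=0$. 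After pulling $x^{\alpha+\alpha'-\gamma-\rho}$ outside the integral and combining $x^{\mu s}a^{-s} = (ax^{-\mu})^{-s}$, one arrives at
\begin{equation*}
x^{\alpha+\alpha'-\gamma-\rho}\,\frac{1}{2\pi i}\int_{C}\chi(s)\chi_6(s)(ax^{-\mu})^{-s}\,ds.
\end{equation*}

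The remaining step is to recognize this Mellin-Barnes integral as the $I^{m,n+3}_{p+3,q+3}$-function displayed in the statement. This is the main obstacle, but it is purely bookkeeping via the definition (\ref{1.2}): each numerator gamma in $\chi_6$ of the form $\Gamma(c+\rho-\mu s)$ can be written as $\Gamma(1-(1-c-\rho)-\mu s)$ and therefore contributes a new top-row parameter $(1-c-\rho,\mu,1)$ appended to the $a_i$'s, augmenting $n$ to $n+3$ and $p$ to $p+3$; similarly, each denominator gamma contributes a new bottom-row parameter appended after the $b_j$'s with $j>m$, augmenting $q$ to $q+3$. Reading off the three numerator and three denominator gammas of $\chi_6$ produced by Lemma~\ref{ll5.1}(b) and matching them with the six new parameter triples in the theorem statement then yields the asserted identity.
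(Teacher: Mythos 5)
Your proposal is correct and follows essentially the same route as the paper's proof: substitute the Mellin--Barnes representation (\ref{1.2}), interchange the contour integral with ${}^{c}\mathcal{D}_{-}^{\alpha,\alpha',\beta,\beta',\gamma}$, apply Lemma \ref{ll5.1}(b) with $\rho$ replaced by $\rho-\mu s$ to obtain the factor $\chi_6(s)$, and identify the resulting integral as the $I^{m,n+3}_{p+3,q+3}$-function. Your parameter bookkeeping (numerator gammas giving new top-row triples, denominator gammas giving new bottom-row triples appended after the $b_j$'s) matches the displayed result exactly.
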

\begin{proof}
Using (\ref{l5.2}) and the definition of $I$-function (\ref{1.2}), we have
\begin{eqnarray*}
&&\left({}^{c}\mathcal{D}_{-}^{\alpha,\alpha',\beta,\beta',\gamma}\left(t^{-\rho}\frac{1}{2\pi i}\int_{C}\chi(s)(at^{-\mu})^{-s}\,ds\right)\right)(x)\\
&&\ \ \ \ \ \ \ \ \ \ \ \ \ \ \ \ \ \ \ =\frac{1}{2\pi i}\int_{C}\chi(s)a^{-s}\left({}^{c}\mathcal{D}_{-}^{\alpha,\alpha',\beta,\beta',\gamma}t^{-(\rho-\mu s)}\right)(x)\,ds\\
&&\ \ \ \ \ \ \ \ \ \ \ \ \ \ \ \ \ \ \ =x^{\alpha+\alpha'-\gamma-\rho}\frac{1}{2\pi i}\int_{C}\chi(s)\chi_6(s)(ax^{-\mu})^{-s}\,ds,
\end{eqnarray*}
where $\chi_6(s)$ is given by 
\begin{equation*}
\chi_6(s)=\frac{\Gamma\left(\beta'+\rho-\mu s+m\right)\Gamma\left(-\alpha-\alpha'+\gamma+\rho-\mu s\right)\Gamma\left(-\alpha'-\beta+\gamma+\rho-\mu s+m\right)}{\Gamma\left(\rho-\mu s\right)\Gamma\left(-\alpha'+\beta'+\rho-\mu s+m\right)\Gamma\left(-\alpha-\alpha'-\beta+\gamma+\rho-\mu s+m\right)}.
\end{equation*}
The theorem is now proved using (\ref{1.2}).
\end{proof}
\begin{corollary}\label{c5.4}
Let $\alpha,\beta,\gamma,\rho,a\in\mathbb{C}$, $m=[\operatorname{Re}(\alpha)]+1$ be such that $\mu>0$ and  $\operatorname{Re}\left(\rho\right)+m>\max\{\operatorname{Re}(\beta)+m,\operatorname{Re}(-\alpha-\gamma)\}$. Then the right-hand sided generalized Caputo fractional differentiation ${}^{c}\mathcal{D}_{-}^{\alpha,\beta,\gamma}$ of the $I$-function is given for $x>0$ by
\begin{eqnarray*}\label{5.10}
&&\left({}^{c}\mathcal{D}_{-}^{\alpha,\beta,\gamma}\left(t^{\rho-1}I^{m,n}_{p,q}\Bigg[at^{\mu}\left|
\begin{matrix}
    (a_i,A_i,\alpha_i)_{1,p}\\ 
    (b_j,B_j,\beta_j)_{1,q}
  \end{matrix}
\right.\Bigg]\right)\right)(x)\\
&&\ \ \ \ \ \ \ \ \ \ \ \ \ \ \ \ \ \ \ =x^{\beta-\rho}I^{m,n+2}_{p+2,q+2}\Bigg[ax^{-\mu}\left|
\begin{matrix}
    (1+\beta-\rho,\mu,1)\\ 
    (b_j,B_j,\beta_j)_{1,q}
  \end{matrix}\right.\\
&&\ \ \ \ \ \ \ \ \ \ \ \ \ \ \ \ \ \ \ \ \ \ \ \ \ \ \ \ \ \begin{matrix}
    (1-\alpha-\gamma-\rho-m,\mu,1)&(a_i,A_i,\alpha_i)_{1,p}\\ 
    (1-\rho,\mu,1)&(1+\beta-\gamma-\rho-m,\mu,1)
  \end{matrix}\Bigg].
\end{eqnarray*}
\end{corollary}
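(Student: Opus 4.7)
The plan is to deduce this corollary from Theorem \ref{t5.2} by specializing the Caputo-type MSM derivative to the Caputo-type Saigo derivative via the connection (\ref{c1.20}). Writing the MSM parameters as $(\alpha_0,\alpha_0',\beta_0,\beta_0',\gamma_0)$ to distinguish them from the Saigo parameters $(\alpha,\beta,\gamma)$ of the corollary, the identity ${}^{c}\mathcal{D}_{-}^{0,\alpha_0',\beta_0,\beta_0',\gamma_0}={}^{c}\mathcal{D}_{-}^{\gamma_0,\alpha_0'-\gamma_0,\beta_0'-\gamma_0}$ forces the substitution $\alpha_0=0$, $\gamma_0=\alpha$, $\alpha_0'=\alpha+\beta$, $\beta_0'=\alpha+\gamma$, while $\beta_0$ remains free since it does not appear on the Saigo side.

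Inserting these values into Theorem \ref{t5.2}, the $x$-exponent $\alpha_0+\alpha_0'-\gamma_0-\rho$ collapses to $\beta-\rho$, matching the corollary. A direct check of the six new parameters of the resulting $I^{m,n+3}_{p+3,q+3}$ then shows the following: the first two top parameters become $(1-\alpha-\gamma-\rho-m,\mu,1)$ and $(1+\beta-\rho,\mu,1)$; the first two bottom parameters become $(1-\rho,\mu,1)$ and $(1+\beta-\gamma-\rho-m,\mu,1)$; and the third top and third bottom parameters both collapse to $(1+\beta+\beta_0-\rho-m,\mu,1)$. Because these coinciding parameters each carry multiplicity $1$, they contribute identical $\Gamma$ factors to numerator and denominator of $\chi_6(s)$ and cancel, reducing the $I$-function to $I^{m,n+2}_{p+2,q+2}$ with precisely the parameters displayed in the corollary.

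The only bookkeeping obstacle is matching the hypotheses. The condition of Theorem \ref{t5.2} specializes to $\operatorname{Re}(\rho)+m>\max\{\operatorname{Re}(-\alpha-\gamma),\operatorname{Re}(\beta+\beta_0),\operatorname{Re}(\beta)+m\}$; the middle bound involves the unconstrained parameter $\beta_0$, so choosing $\operatorname{Re}(\beta_0)$ small enough ensures it is subsumed by $\operatorname{Re}(\beta)+m$, leaving exactly the hypothesis $\operatorname{Re}(\rho)+m>\max\{\operatorname{Re}(\beta)+m,\operatorname{Re}(-\alpha-\gamma)\}$ stated in the corollary. Once the substitution, the $\Gamma$-cancellation, and this hypothesis reduction are recorded, the proof is complete.
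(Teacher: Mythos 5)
Your proposal is correct and coincides with the paper's (implicit) derivation: the corollary is exactly the specialization of Theorem \ref{t5.2} via the connection (\ref{c1.20}) with $\alpha_0=0$, $\gamma_0=\alpha$, $\alpha_0'=\alpha+\beta$, $\beta_0'=\alpha+\gamma$, the coinciding third upper and lower parameters $(1+\beta+\beta_0-\rho-m,\mu,1)$ cancelling to reduce the order to $I^{m,n+2}_{p+2,q+2}$, and the free parameter $\beta_0$ chosen (e.g.\ $\beta_0=0$) so that the extra hypothesis $\operatorname{Re}(\rho)+m>\operatorname{Re}(\beta+\beta_0)$ is absorbed into $\operatorname{Re}(\rho)+m>\operatorname{Re}(\beta)+m$. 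The only discrepancy is a typo in the paper's statement of the corollary, whose left-hand side should read $t^{-\rho}I^{m,n}_{p,q}\big[at^{-\mu}\big|\cdots\big]$ as in Theorem \ref{t5.2}, which is precisely the case your computation treats.
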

\begin{corollary}\label{c5.6}
Let $\alpha,\gamma,\rho,a\in\mathbb{C}$, $m=[\operatorname{Re}(\alpha)]+1$ be such that $\mu>0$ and $\operatorname{Re}\left(\rho\right)+m>\max\{m,\operatorname{Re}(-\alpha-\gamma)\}$. Then the right-hand sided Caputo-type Erd¶elyi-Kober fractional differentiation ${}^{c}\mathcal{D}_{\gamma,\alpha}^{-}$ $(={}^{c}\mathcal{D}_{-}^{\alpha,0,\gamma})$ of the $I$-function is given for $x>0$ by
\begin{eqnarray*}\label{5.12}
&&\left({}^{c}\mathcal{D}_{\gamma,\alpha}^{-}\left(t^{\rho-1}I^{m,n}_{p,q}\Bigg[at^{\mu}\left|
\begin{matrix}
    (a_i,A_i,\alpha_i)_{1,p}\\ 
    (b_j,B_j,\beta_j)_{1,q}
  \end{matrix}
\right.\Bigg]\right)\right)(x)\nonumber\\
&&\ \ \ \ \ \ \ \ \ \ \ \ \ \ \ =x^{-\rho}I^{m,n+1}_{p+1,q+1}\Bigg[ax^{-\mu}\left|
\begin{matrix}
    (1-\alpha-\gamma-\rho-m,\mu,1)&(a_i,A_i,\alpha_i)_{1,p}\\ 
    (b_j,B_j,\beta_j)_{1,q}&(1-\gamma-\rho-m,\mu,1)
  \end{matrix}\right.\Bigg].
\end{eqnarray*}
\end{corollary}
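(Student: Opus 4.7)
The plan is to obtain this corollary as the $\beta=0$ specialization of Corollary~\ref{c5.4}, exploiting the identification ${}^{c}\mathcal{D}_{\gamma,\alpha}^{-}={}^{c}\mathcal{D}_{-}^{\alpha,0,\gamma}$, and then to collapse the resulting $I^{m,n+2}_{p+2,q+2}$-function to an $I^{m,n+1}_{p+1,q+1}$-function by cancelling a matched pair of gamma factors in the Mellin--Barnes integrand.

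First I would substitute $\beta=0$ in Corollary~\ref{c5.4}: the hypothesis $\operatorname{Re}(\rho)+m>\max\{\operatorname{Re}(\beta)+m,\operatorname{Re}(-\alpha-\gamma)\}$ reduces exactly to the hypothesis stated here, and the scalar prefactor $x^{\beta-\rho}$ becomes $x^{-\rho}$. Reading off the parameters of the resulting $I^{m,n+2}_{p+2,q+2}$-function, the two new upper entries become $(1-\rho,\mu,1)$ and $(1-\alpha-\gamma-\rho-m,\mu,1)$, while the two new lower entries become $(1-\rho,\mu,1)$ and $(1-\gamma-\rho-m,\mu,1)$.

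The key observation is that the triple $(1-\rho,\mu,1)$ appears both as one of the first $n+2$ upper parameters (hence in the numerator block of $\chi(s)$ in \eqref{n1.1}, contributing $\Gamma(\rho-\mu s)$) and as one of the last $q-m+2$ lower parameters (hence in the denominator block, also contributing $\Gamma(\rho-\mu s)$). Since both appear with exponent $1$, they cancel inside the Mellin--Barnes integrand, and the residual $\chi$ is precisely the one attached to the $I^{m,n+1}_{p+1,q+1}$ displayed in the corollary.

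The only delicate point is verifying that this cancellation is legal at the level of the contour integral, rather than as a purely formal identity on $\chi(s)$, i.e.\ that the poles of the cancelled $\Gamma(\rho-\mu s)$ were not required to separate numerator poles from denominator poles along~$C$. Under $\mu>0$ and the inherited hypotheses this is routine, since the remaining gamma factors still satisfy the convergence criteria of Section~2, and the formula of the corollary then follows.
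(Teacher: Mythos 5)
Your proposal is correct and follows exactly the route the paper intends: specialize the Saigo-type Caputo result (Corollary \ref{c5.4}) at $\beta=0$ via the identification ${}^{c}\mathcal{D}_{\gamma,\alpha}^{-}={}^{c}\mathcal{D}_{-}^{\alpha,0,\gamma}$, and cancel the coincident upper/lower parameter triple $(1-\rho,\mu,1)$ (both with exponent $1$, contributing the same factor $\Gamma(\rho-\mu s)$ to numerator and denominator of the integrand), which lowers the order to $I^{m,n+1}_{p+1,q+1}$. The cancellation is in fact immediate rather than delicate, since the two gamma factors are identical as functions of $s$, so the Mellin--Barnes integrands agree pointwise on $C$.
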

\section{Conclusion}
The $I$-function is one of the most generalized function available in literature, which generalizes $\overline{H}$-function, $H$-function, Meijer $G$-function, generalized Wright function, hypergeometric function, generalized Mittag Leffler function and many other functions. On the other hand, the MSM fractional operators generalize, among others, Saigo, Riemann-Liouville, Weyl and Erd\'elyi-Kober fractional operators. In view of this fact, several recent results obtained by Srivastava \textit{et.al.} \cite{Srivastava94}, Purohit \textit{et.al.} \cite{Purohit21}, Kilbas and Sebastian in series of papers \cite{Kilbas113},\cite{Kilbas159},\cite{Kilbas869},\cite{Kilbas323} and many more become particular cases of our results.
\printbibliography
\end{document}